\documentclass[11pt]{article}
\usepackage{amsfonts}
\usepackage{latexsym}
\usepackage{amsmath}
\usepackage{graphicx}
\usepackage{amssymb}
\usepackage{amsthm}
\usepackage{amsmath}
\usepackage{fancyhdr}
\usepackage{mathrsfs}
\usepackage{color}
\numberwithin{equation}{section} \allowdisplaybreaks

\newtheorem{theorem}{Theorem}[section]
\newtheorem{lemma}[theorem]{Lemma}
\newtheorem{remark}[theorem]{Remark}

\newtheorem{proposition}[theorem]{Proposition}

\newtheorem{assumption}[theorem]{Assumption}

\definecolor{darkgreen}{rgb}{0,.75,0}
\newcommand{\C}{\mathcal{C}}
\newcommand{\Lc}{\mathcal{L}}
\newcommand{\N}{\mathbb{N}}
\newcommand{\norm}[1]{\big\|#1\big\|}
\newcommand{\E}{\mathbb{E}}

\begin{document}
\title{\textbf{Bayesian Posterior Contraction Rates for Linear Severely Ill-posed Inverse Problems}
{
\thanks{Agapiou and Stuart are supported by ERC and Yuan-Xiang Zhang is supported by China Scholarship Council and the
NNSF of China (No. 11171136).}}}
\author{Sergios Agapiou $^\diamond$\quad {Andrew M. Stuart $^\diamond$\quad Yuan-Xiang Zhang $^{\star}$
} \vspace{0.5cm}\\$\diamond$ Mathematics Institute,\\ University of
Warwick,\\ Coventry CV4 7AL, United Kingdom\\\\
$\star$ School of Mathematics and Statistics,\\ Lanzhou University,\\ Lanzhou 730000, China\\
\\{\small Email: S.Agapiou@warwick.ac.uk,\quad}{\small  A.M.Stuart@warwick.ac.uk, }\\
{\small ZhangYuanxiang@lzu.edu.cn}}
\date{}
\maketitle

{\bf Abstract} {We consider a class of linear ill-posed inverse problems
arising from inversion of a compact operator with singular
values which decay exponentially to zero. We adopt a
Bayesian approach, assuming a Gaussian prior on the unknown
function. The observational noise is assumed to be
Gaussian; as a consequence the prior is conjugate to the likelihood so that
the posterior distribution is also Gaussian. We study
Bayesian posterior consistency in the small observational noise
limit. We assume that the forward operator and the
prior and noise covariance operators commute with one another.
We show how, for given smoothness
assumptions on the truth, the scale parameter of the
prior, which is a constant multiplier of the prior covariance operator, can be adjusted to optimize the rate of posterior
contraction to the truth, and we explicitly compute
the logarithmic rate.}
\vspace{0.4cm}

{\bf Key words.} Gaussian prior, posterior consistency, rate of
contraction, severely ill-posed problems.\\

{\bf  2010 Mathematics Subject Classification.} 62G20, 62C10, 35R30, 45Q05.

\vspace{0.4cm}

\section{Introduction}
\label{s:1}

Let $H$ be an infinite dimensional separable Hilbert space
{and let $\mathcal {L}: H \rightarrow H$ be an injective
compact linear operator with non-closed range. We consider the
ill-posed inverse problem of finding $u$ from data $d$, where
\begin{equation}\label{1}
d=\mathcal {L}u+\eta,
\end{equation}
and where $\eta$ represents noise.}
The problem (\ref{1}) is called mildly
or modestly ill-posed if the singular values of the forward mapping
$\mathcal{L}$ decay algebraically, while it is called severely
ill-posed if the singular values of $\mathcal {L}$ decay
exponentially \cite{EHN96}. {Our interest is focussed
on the severely ill-posed case, and on the small
observational noise limit.}

The use of classical (deterministic) regularization methods
for (\ref{1}), and the small-noise limit in particular,
is well-studied in  both the mildly ill-posed \cite{EHN96}
and severely ill-posed \cite{Hoh00} cases; nonlinear
inverse problems have also been studied from
this perspective \cite{EHN96}.
However, if we wish to incorporate information
concerning the statistical structure of the unknown and the noise,
then it is natural to adopt a Bayesian perspective. The
Bayesian approach to linear ill-posed inverse problems
was adopted in \cite{Fr70},
in which the severely ill-posed
problem of inverting the heat operator was considered, and
then developed systematically in \cite{Man84,LPS89}. More
recently, nonlinear inverse problems have been given a
Bayesian  formulation \cite{La07,Stu10,las12,las12b}.
However, study of the small noise limit, known as posterior
consistency in the Bayesian context, is an under-developed
aspect of the Bayesian methodology for inverse problems.
Our work adds to the growing literature in this area.

For mildly ill-posed linear problems, subject to Gaussian
observational noise, Bayesian posterior consistency is
considered in the recent papers \cite{ALS12, KVZ11}. In
\cite{KVZ11}, sharp contraction rates are obtained for
white observational noise when the forward operator
$\mathcal {L}$ and the prior covariance operator are
simultaneously diagonalizable; this allows the analysis to
proceed through the study of an infinite set of uncoupled
scalar linear inverse problems. In \cite{ALS12} the
setting of \cite{KVZ11} is generalized to allow for
non-white noise and operators which are not simultaneously
diagonalizable, using tools from PDE theory.
The paper \cite{KVZ12} is, to the best of the authors'
knowledge, the first to study Bayesian posterior consistency
for severely ill-posed problems. It concerns the
one-dimensional backward heat equation with white noise,
where the $j$th eigenvalue of the (self-adjoint) forward mapping
decays like $\exp({-j^2})$ and works in the simultaneously
diagonalizable paradigm of \cite{KVZ11}. In this paper,
we generalize the work in \cite{KVZ12} by studying Bayesian
posterior consistency for a class of severely ill-posed inverse
problems in which the $j$th singular value of
$\mathcal{L}$ decays as $\exp({-sj^b})$ for arbitrary positive
$s$ and $b$, again working in the simultaneously
diagonalizable paradigm of \cite{KVZ11}. In addition to the backward heat equation
considered in \cite{KVZ12} ($b=2$), there are a variety of
ill-posed inverse problems covered by our theory.
For instance, the Cauchy problem for the Laplace equation
and  the Cauchy problem for the Helmholtz equation or the
modified Helmholtz equation (see \cite{ZFD12} and the references therein):
the eigenvalue decay of the forward mapping for these
three examples  corresponds to $b=1$. Our analysis
is inspired by both the problem and techniques used
in \cite{KVZ12}; however our generalized setting leads
to some technical improvements in the proofs, we discuss
new results relating to the equivalence of the prior
and posterior and we include a numerical illustration
for the Helmholtz equation.

The rest of this paper is organized as follows. In
Section \ref{s:2} we introduce notation and give informal calculations for the
posterior mean and covariance operator. In Section \ref{s:3} we
characterize the posterior distribution rigorously
and show that it is equivalent, in the sense of measures,
to the prior -- see Theorems \ref{t3.1}
and \ref{t3.2}.
In Section \ref{s:4} we present and prove the main results
concerning posterior consistency,
characterizing the error in the mean in
Theorem {\ref{t4.1}, the contraction of the posterior
covariance in Theorem {\ref{t4.2}} and putting these together
to estimate posterior contraction rates in Theorem
{\ref{t4.3}}.} A discussion of the convergence rates obtained in our three main theorems, which includes comments on their minimax optimality, is contained in Remark \ref{ch2:rem}. Some technical lemmas which are essential to
the proof of Theorems {\ref{t4.1}}, {\ref{t4.2}} and
{\ref{t4.3}} are attached at the end of this section. Section \ref{s:5}
concludes the paper with a simple example for which the
theoretical analysis can be applied and includes
a numerical experiment which is consistent with the theory.

\section{Notation and Problem Setting}
\label{s:2}

\subsection{Notation}

Throughout the paper, $\langle\cdot,\cdot\rangle$ and $\|\cdot\|$
denote the inner product and norm of the separable infinite dimensional Hilbert space $H$. For a
self-adjoint positive operator $\Gamma$, we define the weighted
inner product and the corresponding norm as follows,
\begin{equation*}
\langle\cdot,\cdot\rangle_{\Gamma}=\langle\Gamma^{-\frac{1}{2}}\cdot,
\Gamma^{-\frac{1}{2}}\cdot\rangle,\quad
\|\cdot\|_{\Gamma}=\|\Gamma^{-\frac{1}{2}}\cdot\|.
\end{equation*}

Let $\{\varphi_j\}_{j=1}^\infty$ denote an orthonormal basis in $H$.
Then we can express $u\in H$ as $u=\sum\limits_{j=1}^\infty
u_j\varphi_j$ with $u_j=\langle u,\varphi_j\rangle$ and for
$\gamma\geq0$ we define the norm $\|.\|_\gamma$ by
\[\|u\|^2_\gamma:=\sum\limits_{j=1}^{\infty}u_j^2j^{2\gamma}.\] We use
$H^\gamma, \;\gamma\geq0$ to denote the Sobolev-like space
\begin{equation*}
H^\gamma=\{u\in H: \|u\|_\gamma<\infty\}.
\end{equation*} For $\gamma<0$, we define the spaces $H^\gamma$ by duality: $H^\gamma=(H^{-\gamma})^\ast$.

In the following we consider random variables drawn from Gaussian disrtibutions in $H$,
denoted by $N(\theta,\Sigma)$ where the mean $\theta$ is an element of $H$ and the covariance
operator $\Sigma$ is a positive definite, self-adjoint, trace class, linear operator in $H$.
The operator $\Sigma$ possesses an infinite set of eigenfunctions $\{\varphi_j\}_{j\in\N}$
which correspond to positive eigenvalues $\{\sigma_j\}_{j\in\N}$ and which form an orthonormal
basis of $H$. One can express a draw $y$ from $N(\theta,\Sigma)$ using the Karhunen-Loeve expansion as
\begin{equation}y=\theta+\sum_j\sqrt{\sigma_j}\xi_j\varphi_j,\end{equation}
where $\xi_j$ are independent and identically distributed $N(0,1)$ real random variables, \cite{Giu06, Stu10}.
In particular, the expansion coefficients $y_j=\theta_j+\sqrt{\sigma_j}\xi_j$ are $N(\theta_j,\sigma_j)$
real variables and it is easy to see that $\E\norm{y}^2=\norm{\theta}^2+\rm Tr(\Sigma)$ and that
for any bounded linear operator $T$ in $H$, $Ty$ is distributed as $N(T\theta, T\Sigma T^\ast)$.  {It is also straightforward to check that if $\theta=0$ and $\sigma_j=j^{-2r}$ for some $r\in\mathbb{R},$ then $y\in H^{\gamma}$ almost surely, for any $\gamma<r-\frac12$.}

{For two sequences $k_j$ and $h_j$ of real numbers,
$k_j\asymp h_j$ means
that $\frac{|k_j|}{|h_j|}$ is bounded away from zero and infinity as
$j\rightarrow\infty$, $k_j\lesssim h_j$ means that $\frac{k_j}{h_j}$
is bounded as $j\rightarrow\infty$,
and $k_j\sim h_j$ means that $\frac{k_j}{h_j}\rightarrow
1$ as $j\rightarrow\infty$.}
We will use $M$ to denote a constant which is different from
occurrence to occurrence.

\subsection{Bayesian setting and informal charaterization
of the posterior}

In this subsection we describe the assumptions underlying the
Bayesian formulation of the linear inverse problem.
Furthermore we provide informal calculations which
motivate the expressions for the posterior mean and covariance.
These will be made precise in Section \ref{s:3}.

We place a scaled Gaussian prior on the unknown $u$ of the form
$\mu_0:=N(0,\tau^2\mathcal{C}_0)$, where $\tau>0$ is a scale parameter and
$\mathcal{C}_0$ is a self-adjoint, positive-definite, trace class,
linear operator on $H$. We assume Gaussian observational noise in (\ref{1})
which is independent of $u$.
In particular, we model the data as
\begin{equation}\label{21}
d=\mathcal{L}u+\frac{1}{\sqrt{n}}\xi,
\end{equation}
where $\frac{1}{\sqrt{n}}$ is a scale parameter modelling the noise level
and $\xi$ is a random variable independent of $u$ and distributed as $N(0, \C_1)$. The linear operator
$\mathcal{C}_1$ is assumed to be self-adjoint,
positive-definite, bounded, but not necessarily trace class on $H$. This allows for
the possibility of having irregular
noise which is not in $H$. For example, the case where $\xi$
is white noise corresponds to $\C_1=I$, and
can be viewed as a Gaussian random variable in $H^{-r}$
for $r>\frac12$. Under these assumptions, the conditional
distribution of $d|u$, called the \emph{data likelihood},
is the translate of $N(0,\mathcal{C}_1)$ by $\mathcal{L}u$,
which is also Gaussian:
\begin{equation}\label{22}
N(\mathcal{L}u, \frac{1}{n}\mathcal{C}_1).
\end{equation}

In finite dimensions the density of the \emph{posterior} distribution,
that is the conditional distribution of $u|d$, is found
from Bayes rule to be proportional to $\exp(-\Phi(u))$, where
\begin{equation}\label{24}
\Phi(u)=\frac{n}{2}\|d-\mathcal{L}u\|_{\mathcal{C}_1}^2+\frac{1}{2\tau^2}\|u\|_{\mathcal{C}_0}^2.
\end{equation}
This suggests that in our infinite dimensional setting, the posterior distribution is
Gaussian, $\mu^d:=N(m,\C)$, where the mean $m$ and covariance $\C$ can be informally
derived from (\ref{24}) using completion of the square:
\begin{equation}\label{26}
\mathcal{C}^{-1}=n\mathcal{L}^*\mathcal{C}_1^{-1}\mathcal{L}+\frac{1}{\tau^2}\mathcal{C}_0^{-1},
\end{equation}
and
\begin{equation}\label{27}
\frac{1}{n}\mathcal{C}^{-1}m=\mathcal{L}^*\mathcal{C}_1^{-1}d.
\end{equation}
{Note that in general the last two formulae need to be interpreted weakly using the Lax-Milgram theory as in \cite{ALS12}. However, in the present paper we work in a diagonal setup which makes the handling of the unbounded inverse covariance operators straightforward.}

Observe that the posterior mean $m$ is the minimizer of the
functional $\Phi(u)$. If we define
$\Phi_0(u)=\frac{1}{n}\Phi(u)$ and denote
\begin{equation}\label{28}
\lambda:=\frac{1}{n\tau^2},
\end{equation}
then $m$ also minimizes the
functional $\Phi_0(u)$, that is,
\begin{equation}\label{29}
m=\arg\min_{u}\Phi_0(u),
\end{equation}
where \[\Phi_0(u)=\frac{1}{2}\|d-\mathcal{L}u\|_{\mathcal{C}_1}^2+
\frac{\lambda}{2}\|u\|_{\mathcal{C}_0}^2.\] {Thus the posterior mean
is a Tikhonov-Phillips regularized solution in the classical sense {(in fact $\Phi_0$ is almost surely infinite and we should really consider $\Psi_0=\Phi_0-\frac12\norm{d}^2_{\C_1}$ which is finite; the minimizer is unaffected)}.}
This reveals {the close} connection between Bayesian and classical
regularization for inverse problems. In the deterministic framework,
$\lambda$ is called the \emph{regularization parameter} which is
carefully chosen in order to balance consistency and stability.
Similarly, for given inverse noise level $n$, the scale parameter
$\tau$ introduced in the prior can be judiciously chosen to
guarantee a small error between the posterior mean and the true
unknown, as we will see in Section \ref{s:4}.

Posterior consistency refers, in statistical inverse problems, to
studying the relationship between the result of the statistical
analysis and the truth which underlies the data in either the small
noise or large data limits; we concentrate on the small noise limit.
We consider the standard Bayesian variant on frequentist posterior
consistency \cite{DF86, GGV00} for our severely ill-posed inverse
problem. To this end we consider observations which are
perturbations of the image of a fixed element $u^\dagger\in H$ by a
scaled Gaussian additive noise, that is, we have data $d=d^\dagger$
of the form
\begin{equation}\label{212}
d^\dagger=\mathcal{L}u^\dagger+\frac{1}{\sqrt{n}}\xi
\end{equation}
where $\xi$ is a single realization of $N(0,\mathcal{C}_1).$
This choice of data model gives the posterior distribution as
$\mu^{d^\dagger}_{\lambda,n}:=N(m^{\dagger},\mathcal{C})$, where
$\mathcal{C}$ is given by (\ref{26}) and $m^{\dagger}$
is given by (\ref{27}) with $d=d^\dagger.$ Similar to the
practice in the deterministic framework, we assume
a-priori known regularity of the true solution and identify
contraction rates of the posterior $\mu_{\lambda, n}^{d^\dagger}$ to
a Dirac measure centered on the true solution, as the noise disappears ($n\to\infty$).

\subsection{Model assumptions}
In this subsection we present our assumptions on the operators appearing in our
framework, that is, on the forward operator $\Lc$, the prior covariance operator $\C_0$ and the noise
covariance operator $\C_1$.

\begin{assumption}
\label{a2.1}

The operators $\mathcal{L}$, $\mathcal{C}_0$ and $\mathcal{C}_1$
commute with one another, so that $\mathcal{L}^*\mathcal{L}$,
$\mathcal{C}_0$ and $\mathcal{C}_1$ have the same eigenfunctions
$\{\varphi_j\}_{j=1}^\infty$. {The corresponding eigenvalues
$\{l_j^2\}_{j=1}^\infty$, $\{c_{0j}\}_{j=1}^\infty$ and
$\{c_{1j}\}_{j=1}^\infty$ of $\mathcal{L}^*\mathcal{L}$,
$\mathcal{C}_0$ and $\mathcal{C}_1$ are
assumed to satisfy}
\begin{equation}\label{210}
l_j\asymp\exp(-sj^b),\quad c_{0j}=j^{-2\alpha},\quad
c_{1j}=j^{-2\beta},
\end{equation}
for $s>0,b>0,\alpha>\frac{1}{2},\beta\geq 0$. Furthermore, the fixed
true solution $u^\dagger$ belongs to $H^\gamma$ for some $\gamma>0$.
\end{assumption}

\begin{remark}
\label{r2.2}

As is well known in finite dimensions, in the current infinite dimensional
separable Hilbert-space setting, if $\mathcal{L}$, $\mathcal{C}_0$ and $\mathcal{C}_1$
commute with one another, then $\mathcal{L}^*\mathcal{L}$,
$\mathcal{C}_0$ and $\mathcal{C}_1$ have the same eigenfunctions
$\{\varphi_j\}_{j=1}^\infty$ \cite{Lax07, Sch07}. 

\end{remark}

\begin{remark}
One can relax the assumptions on the eigenvalues of $\C_0$ and
$\C_1$ to $c_{0j}\asymp j^{-2\alpha}$ and $c_{1j}\asymp j^{-2\beta}$
without affecting any of the subsequent results.
\end{remark}

\section{Characterization of the Posterior}\label{s:3}
In \cite{Man84, LPS89} it is proved in the infinite dimensional setting that the posterior is Gaussian with
 covariance and mean given by
\begin{equation}\label{31}
\mathcal{C}=\tau^2\mathcal{C}_0-\tau^2\mathcal{C}_0\mathcal{L}^*
(\mathcal{L}\mathcal{C}_0\mathcal{L}^*+\lambda\mathcal{C}_1)^{-1}\mathcal{L}\mathcal{C}_0
\end{equation}
and
\begin{equation}\label{32}
m=\mathcal{C}_0\mathcal{L}^*(\mathcal{L}\mathcal{C}_0\mathcal{L}^*+\lambda\mathcal{C}_1)^{-1}d,
\end{equation}
respectively. {In general, the operator $(\Lc\C_0\Lc^\ast+\lambda\C_1)^{-1}$ in the last two formulae needs measure theoretic clarification. However, in the simultaneously diagonalizable case considered here, the interpretation is trivial and furthermore
these formulae are equivalent to the formulae (\ref{26}) and (\ref{27})
\cite[Example 6.23]{Stu10}}. Furthermore, since $\mathcal{L}$, $\mathcal{C}_0$ and
$\mathcal{C}_1$ commute with one another, the equations (\ref{31})
and (\ref{32}) can be rewritten as
\begin{equation}\label{33}
\mathcal{C}=\tau^2\mathcal{C}_0-\tau^2\mathcal{A}\mathcal{L}\mathcal{C}_0
\end{equation}
and
\begin{equation}\label{34}
m=\mathcal{A}d,
\end{equation}
where $\mathcal{A}: H\rightarrow H$ is the continuous linear
operator
\[\mathcal{A}=\mathcal{C}_0^{\frac{1}{2}}\big(\mathcal{C}_0^{\frac{1}{2}}\mathcal{L}^*
\mathcal{L}\mathcal{C}_0^{\frac{1}{2}}+\lambda\mathcal{C}_1\big)^{-1}\mathcal{C}_0^{\frac{1}{2}}\mathcal{L}^*
=\mathcal{C}_0\mathcal{L}^*(\mathcal{L}\mathcal{C}_0\mathcal{L}^*+\lambda\mathcal{C}_1)^{-1}.\] {In fact even if $d\notin H$, $\mathcal{A} d$ can be defined using the diagonalization.}

In the next two theorems we
show that the Gaussian posterior distribution
$\mu^d$, with covariance and mean given by
(\ref{33}) and (\ref{34}), is a proper conditional Gaussian
distribution on $H$ and is absolutely continuous with
respect to the prior.

\begin{theorem}
\label{t3.1}

Suppose Assumption \ref{a2.1} holds, then: (i) the covariance
operator $\mathcal{C}$ of the conditional distribution $\mu^d$ given
by (\ref{33}) is trace class on $H$; (ii) the mean $m$ of the
conditional posterior distribution given by (\ref{34}) is an element
of $H$, {almost surely with respect to the joint distribution
of $(u,d).$}
Thus $\mu^d(H)=1$.
\end{theorem}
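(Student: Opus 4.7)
The whole argument reduces to a scalar computation in the common eigenbasis $\{\varphi_j\}$. Since $\mathcal{L}$, $\mathcal{C}_0$, $\mathcal{C}_1$ are simultaneously diagonalizable, the operator $\mathcal{A}$ in (3.4) is diagonal with eigenvalues
\begin{equation*}
a_j \;=\; \frac{c_{0j}\,l_j}{l_j^{2}\,c_{0j}+\lambda c_{1j}},
\end{equation*}
and the covariance $\mathcal{C}=\tau^{2}\mathcal{C}_0-\tau^{2}\mathcal{A}\mathcal{L}\mathcal{C}_0$ is diagonal with eigenvalues
\begin{equation*}
\sigma_j \;=\; \tau^{2}c_{0j}\Bigl(1-\frac{l_j^{2}\,c_{0j}}{l_j^{2}\,c_{0j}+\lambda c_{1j}}\Bigr) \;=\; \tau^{2}c_{0j}\,\frac{\lambda c_{1j}}{l_j^{2}\,c_{0j}+\lambda c_{1j}}.
\end{equation*}
I would record these two formulae at the start of the proof and then treat (i) and (ii) separately.

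For (i), the key observation is the trivial bound $0\le\sigma_j\le\tau^{2}c_{0j}$, which follows since the fraction multiplying $\tau^{2}c_{0j}$ lies in $[0,1]$. Summing and using $c_{0j}=j^{-2\alpha}$ with $\alpha>\tfrac12$ from Assumption \ref{a2.1} gives
\begin{equation*}
\mathrm{Tr}(\mathcal{C}) \;\le\; \tau^{2}\,\mathrm{Tr}(\mathcal{C}_0) \;=\; \tau^{2}\sum_{j=1}^{\infty} j^{-2\alpha} \;<\;\infty,
\end{equation*}
so $\mathcal{C}$ is trace class on $H$. Note that the exponential decay of $l_j$ plays no role here; it is the prior covariance alone that controls the posterior trace.

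For (ii), since $d$ is generally not an element of $H$, I would define $m=\mathcal{A} d$ componentwise via its Fourier coefficients $m_j=a_j d_j$, which is rigorous because $\{a_j\}$ is bounded. Under the joint law of $(u,d)$ we have $d_j=l_j u_j+n^{-1/2}\xi_j$ with $u_j\sim N(0,\tau^{2}c_{0j})$ and $\xi_j\sim N(0,c_{1j})$ independent, hence $d_j\sim N(0,\tau^{2}l_j^{2}c_{0j}+n^{-1}c_{1j})$. A direct calculation, using $\lambda=1/(n\tau^{2})$, gives
\begin{equation*}
\mathbb{E}\,m_j^{2} \;=\; a_j^{2}\,\mathbb{E}\,d_j^{2} \;=\; \tau^{2}c_{0j}\,\frac{l_j^{2}c_{0j}}{l_j^{2}c_{0j}+\lambda c_{1j}} \;\le\; \tau^{2}c_{0j}.
\end{equation*}
Summing yields $\mathbb{E}\|m\|^{2}\le\tau^{2}\,\mathrm{Tr}(\mathcal{C}_0)<\infty$, so $\|m\|<\infty$ almost surely, i.e.\ $m\in H$ for $(u,d)$-almost every pair. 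Combining (i) and (ii) and applying the Karhunen--Lo\`eve expansion to $\mu^d=N(m,\mathcal{C})$ produces a draw lying in $H$ almost surely, hence $\mu^d(H)=1$. There is no real obstacle in the proof; the only mildly subtle point is making sense of $\mathcal{A} d$ when $d\notin H$, which the diagonal structure handles cleanly, and noting that the same bound $\sigma_j\le\tau^{2}c_{0j}$ and $\mathbb{E} m_j^{2}\le\tau^{2}c_{0j}$ drives both halves of the statement.
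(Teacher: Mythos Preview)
Your proof is correct. Part (i) is identical to the paper's argument. For part (ii) you take a slightly different and cleaner route: the paper splits $m=\mathcal{A}\mathcal{L}u+n^{-1/2}\mathcal{A}\xi$, bounds each term separately via the crude inequality $l_j^{2}c_{0j}+\lambda c_{1j}\ge\lambda c_{1j}$, and then invokes the exponential decay of $l_j$ to make the resulting series converge. You instead compute $\mathbb{E}\,m_j^{2}$ directly from the marginal variance of $d_j$ and obtain the sharp identity
\[
\mathbb{E}\,m_j^{2}=\tau^{2}c_{0j}\,\frac{l_j^{2}c_{0j}}{l_j^{2}c_{0j}+\lambda c_{1j}}=\tau^{2}c_{0j}-\sigma_j\le\tau^{2}c_{0j},
\]
which is just the law of total variance at the coefficient level. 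This is more elegant and, unlike the paper's bound, does not use the severe ill-posedness of $\mathcal{L}$ at all; your argument would go through verbatim for algebraically decaying $l_j$. One minor quibble: your justification ``which is rigorous because $\{a_j\}$ is bounded'' is true here (the exponential decay of $l_j$ forces $a_j\to 0$), but it is not actually needed---the componentwise definition $m_j=a_j d_j$ makes sense regardless, and your subsequent $L^2$ bound is what establishes $m\in H$.
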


\begin{proof}

The fact that $\mu^d(H)=1$ follows from (i) and (ii)
is well-known \cite{Giu06}. We thus prove these two points.

(i) {Using the basis $\{\varphi_j\}$, by equation (\ref{33})
we have that the eigenvalues of $\mathcal{C}$ are given by}
\begin{equation}\label{35}
c_j=\tau^2c_{0j}-\frac{\tau^2c^2_{0j}l_j^2}{c_{0j}l_j^2+\lambda
c_{1j}}=\frac{\tau^2\lambda c_{0j}c_{1j}}{c_{0j}l_j^2+\lambda
c_{1j}}\leq\tau^2c_{0j}.
\end{equation}
Since $\mathcal{C}_0$ is trace class on $H$, it follows that
$\mathcal{C}$ is trace class on $H$.

(ii) From (\ref{34}) we have that,
\begin{eqnarray}\label{36}
\mathbb{E}\|m\|^2&=&\mathbb{E}\|\mathcal{A}d\|^2
=\mathbb{E}\|\mathcal{A}\mathcal{L}u+\frac{1}{\sqrt{n}}\mathcal{A}\xi\|^2\nonumber\\
&=&\mathbb{E}\|\mathcal{A}\mathcal{L}u\|^2+\frac{1}{n}\mathbb{E}\|\mathcal{A}\xi\|^2
\end{eqnarray}
{since $\xi$ and $u$ are independent and $\xi$ has mean zero.
{In this simultaneously diagonalizable setting it is straightforward to see using the Karhunen-Loeve expansion, that even if $\xi$ is not in $H$ the distribution of
$\mathcal{A}\xi$ is $N(0,\mathcal{A}\mathcal{C}_1\mathcal{A}^*),$ which, due to the smoothness of $\mathcal{A}$, is a random variable in $H$.} It follows, again working in the
basis $\{\varphi_j\}_{j=1}^\infty$,
that
}
\begin{eqnarray}\label{37}
\mathbb{E}\|m\|^2&=&\mathbb{E}\|\mathcal{A}\mathcal{L}u\|^2+
\frac{1}{n}{{\rm Tr}}(\mathcal{A}\mathcal{C}_1\mathcal{A}^*)\nonumber\\
&=&\sum_j\frac{{\tau^2}c_{0j}^3l_j^4}{(l_j^2c_{0j}+\lambda
c_{1j})^2}+\frac{1}{n}\sum_j\frac{c_{0j}^2l_j^2c_{1j}}{(l_j^2c_{0j}+\lambda
c_{1j})^2}\nonumber\\
&\leq&\frac{{\tau^2}}{\lambda^2}\sum_jc_{0j}^3c_{1j}^{-2}l_j^4+
\frac{1}{n\lambda^2}\sum_jc_{0j}^2c_{1j}^{-1}l_j^2\nonumber\\
&\asymp&\frac{{\tau^2}}{\lambda^2}\sum_jj^{4\beta-6\alpha}\exp(-4sj^b)+
\frac{1}{n\lambda^2}\sum_jj^{2\beta-4\alpha}\exp(-2sj^b)\nonumber\\
&<&\infty.\nonumber
\end{eqnarray}
Hence $\|m\|$
is almost surely finite, which completes the proof.
\end{proof}

\begin{theorem}
\label{t3.2}

Suppose Assumption \ref{a2.1} holds, then the posterior measure $\mu^d=N(m,\C)$ with covariance
and mean given by (\ref{33}) and (\ref{34}), respectively, is
equivalent to the prior measure $\mu_0=N(0,\tau^2\mathcal{C}_0)$, {almost surely with respect to the joint distribution of $(u,d)$.}
\end{theorem}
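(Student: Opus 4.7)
The plan is to apply the Feldman-Hajek theorem for equivalence of Gaussian measures on a separable Hilbert space. This reduces to three conditions: (i) the Cameron-Martin ranges $\C^{1/2}(H)$ and $(\tau^2\C_0)^{1/2}(H)$ coincide, (ii) the mean difference $m-0=m$ belongs to the common Cameron-Martin space $(\tau^2\C_0)^{1/2}(H)$, and (iii) the operator $(\tau^2\C_0)^{-1/2}\C(\tau^2\C_0)^{-1/2}-I$ is Hilbert-Schmidt. Because $\C$ and $\tau^2\C_0$ share the eigenbasis $\{\varphi_j\}$ by Assumption \ref{a2.1}, each condition becomes an explicit scalar statement about the eigenvalue sequences, and these can be checked directly from the asymptotics in (\ref{210}).

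For (i) and (iii) I would examine the ratio $\epsilon_j := c_j/(\tau^2 c_{0j})$, which by (\ref{35}) equals
\[
\epsilon_j = \frac{\lambda c_{1j}}{c_{0j} l_j^2 + \lambda c_{1j}} \in (0,1].
\]
Since $c_{0j} l_j^2/c_{1j} \asymp j^{2(\beta-\alpha)}\exp(-2sj^b)$ decays to $0$ exponentially fast, $\epsilon_j \to 1$, so $\{\epsilon_j\}$ is bounded above and bounded away from zero (the finitely many initial terms are strictly positive), yielding (i). Moreover $1-\epsilon_j \leq c_{0j} l_j^2/(\lambda c_{1j}) \asymp \lambda^{-1} j^{2(\beta-\alpha)}\exp(-2sj^b)$, so the diagonal operator in (iii), with eigenvalues $\epsilon_j-1$, satisfies $\sum_j (\epsilon_j-1)^2 < \infty$, which is exactly the Hilbert-Schmidt condition.

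The delicate step is (ii), since the mean $m=\mathcal{A}d$ is random through $d$. I would show that $\sum_j m_j^2/c_{0j} < \infty$ almost surely by establishing the stronger claim that its expectation under the joint distribution of $(u,d)$ is finite, after which Fubini gives the almost sure conclusion. Writing $m_j = \frac{c_{0j}l_j}{c_{0j}l_j^2+\lambda c_{1j}}d_j$ with $d_j = l_j u_j + n^{-1/2}\xi_j$, and exploiting independence of $u_j \sim N(0,\tau^2 c_{0j})$ and $\xi_j \sim N(0,c_{1j})$, the quantity $\E[m_j^2/c_{0j}]$ decomposes into two terms which differ from the corresponding ones in the proof of Theorem \ref{t3.1}(ii) only by a factor $c_{0j}^{-1}$. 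Bounding $(c_{0j}l_j^2+\lambda c_{1j})^2$ below by $(\lambda c_{1j})^2$ reduces the two resulting series to $\sum_j j^{4\beta-4\alpha}\exp(-4sj^b)$ and $\sum_j j^{2\beta-2\alpha}\exp(-2sj^b)$, both convergent thanks to the exponential decay of $l_j$. This completes the verification of all three Feldman-Hajek conditions, so the posterior is almost surely equivalent to the prior; the main obstacle throughout is simply handling the randomness of $m$ in condition (ii), but the diagonal setting and exponential decay of $l_j$ make the estimates straightforward.
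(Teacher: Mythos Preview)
Your proposal is correct and follows essentially the same route as the paper: both invoke the Feldman--Hajek theorem, verify the three conditions via the shared eigenbasis, bound the ratio $c_j/(\tau^2 c_{0j})$ above and away from zero for (i), show $\mathbb{E}\|\mathcal{C}_0^{-1/2}m\|^2<\infty$ for (ii) by the same decomposition into prior and noise contributions with the denominator bounded below by $(\lambda c_{1j})^2$, and obtain the Hilbert--Schmidt estimate for (iii) from the exponential decay $\exp(-4sj^b)$. The only cosmetic difference is that the paper phrases condition (iii) as $I-\tau^2\mathcal{C}^{-1/2}\mathcal{C}_0\mathcal{C}^{-1/2}$ being Hilbert--Schmidt (eigenvalues $1-\tau^2 c_{0j}/c_j$) rather than your $(\tau^2\mathcal{C}_0)^{-1/2}\mathcal{C}(\tau^2\mathcal{C}_0)^{-1/2}-I$ (eigenvalues $c_j/(\tau^2 c_{0j})-1$); since the ratios tend to $1$ and are bounded away from zero, the two conditions are trivially equivalent.
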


\begin{proof}

By the Feldman-Hajek theorem \cite[Theorem 2.23]{DZ92}, to show that the Gaussian measure
$\mu^d=N(m,\mathcal{C})$ is equivalent to $\mu_0=N(0,\tau^2\C_0)$, it suffices to
show:

(i) The Cameron-Martin spaces associated with $\mu^d$ and $\mu_0$
are equal, that is,
$\mathscr{D}(\mathcal{C}^{-\frac{1}{2}})=\mathscr{D}(\mathcal{C}_0^{-\frac{1}{2}}):=E$.

(ii) The posterior mean $m$ lies in the Cameron-Martin space $E$.

(iii) The operator
$T:=I-\tau^2\mathcal{C}^{-\frac{1}{2}}\mathcal{C}_0\mathcal{C}^{-\frac{1}{2}}$
is Hilbert-Schmidt.\\

We now check the validity of the above conditions.
For (i) it is equivalent to show
that there exists a constant $M$ such that
\begin{equation}\label{38}
\langle h, \mathcal{C}h\rangle\leq M\langle h,
\mathcal{C}_0h\rangle, \forall h\in H
\end{equation}
and
\begin{equation}\label{39}
\langle h, \mathcal{C}_0h\rangle\leq M\langle h,
\mathcal{C}h\rangle, \forall h\in H;
\end{equation}
this follows from \cite[Lemma 6.15]{Stu10} using \cite[Proposition B1]{DZ92}.
Using the eigenbasis expansion, these are equivalent to
\begin{equation}\label{310}
c_{j}\leq Mc_{0j}
\end{equation}
and
\begin{equation}\label{311}
c_{0j}\leq M c_j.
\end{equation}
From (\ref{35}), we
know that (\ref{310}) is true with $M=\tau^2$. Again by (\ref{35}),
we have
\begin{eqnarray}\label{312}
c_j=\frac{\tau^2c_{0j}}{1+\lambda^{-1}l_j^2c_{0j}c_{1j}^{-1}}
\asymp\frac{\tau^2c_{0j}}{1+\lambda^{-1}\exp(-2sj^b)j^{2\beta-2\alpha}}\geq
Mc_{0j},
\end{eqnarray}
where $M=\frac{\tau^2}{1+K}$ and $K$ is a constant.

For (ii), it is easy to check that
$E=\mathscr{D}(\mathcal{C}_0^{-\frac{1}{2}})=H^\alpha$. The
{mean square expectation of the posterior mean $m$ in
$H^\alpha$} can be estimated similarly to (\ref{37}):
\begin{eqnarray}\label{313}
\mathbb{E}\|m\|_{H^\alpha}^2&&=\mathbb{E}\|\mathcal{C}_0^{-\frac{1}{2}}m\|^2=
\mathbb{E}\|\mathcal{C}_0^{-\frac{1}{2}}\mathcal{A}d\|^2\nonumber\\
&&=\mathbb{E}\|\mathcal{C}_0^{-\frac{1}{2}}\mathcal{A}\mathcal{L}u+
\frac{1}{\sqrt{n}}\mathcal{C}_0^{-\frac{1}{2}}\mathcal{A}\xi\|^2\nonumber\\
&&=\mathbb{E}\|\mathcal{C}_0^{-\frac{1}{2}}\mathcal{A}\mathcal{L}u\|^2+
\frac{1}{n}{\rm Tr}(\mathcal{C}_0^{-\frac{1}{2}}\mathcal{A}
\mathcal{C}_1\mathcal{A}^*\mathcal{C}_0^{-\frac{1}{2}})
\nonumber\\
&&=\sum_j\frac{\tau^2c_{0j}^2l_j^{{4}}}{(l_j^2c_{0j}+\lambda
c_{1j})^2}+{\lambda}\sum_j\frac{c_{0j}l_j^2c_{1j}}{(l_j^2c_{0j}+\lambda
c_{1j})^2}\nonumber\\
&&\leq\frac{\tau^2}{\lambda^2}\sum_jc_{0j}^2c_{1j}^{-2}l_j^{{4}}+
\frac{1}{{\lambda}}\sum_jc_{0j}c_{1j}^{-1}l_j^2\nonumber\\
&&\asymp\frac{\tau^2}{\lambda^2}\sum_jj^{4\beta-4\alpha}\exp(-{4}sj^b)+
\frac{1}{{\lambda}}\sum_jj^{2\beta-2\alpha}\exp(-2sj^b)\nonumber\\
&&<\infty,
\end{eqnarray}
therefore $m\in E$ almost surely.

For (iii), using (\ref{35}) again, we have
\begin{eqnarray}\label{314}
\sum_{j=1}^{\infty}{\Bigl(}1-\frac{\tau^2c_{0j}}{c_j}{\Bigr)}^2=
{\frac{1}{\lambda^2}}\sum_{j=1}^{\infty}c_{0j}^2l_j^4c_{1j}^{-2}
\asymp\sum_{j=1}^{\infty}\exp(-4sj^b)j^{4\beta-4\alpha}<\infty,
\end{eqnarray}
{demonstrating} that the operator $T$ is Hilbert-Schmidt.
\end{proof}

The preceding result is interesting because, without the
assumption that the inverse problem is severely ill-posed,
it is possible to construct
linear inverse problems of the form considered in this
paper, but for which the posterior is not absolutely continuous
with respect to the prior. For example, suppose that
we modify Assumption \ref{a2.1} so that the
forward operator $\Lc$ has singular values that decay algebraically,
$l_j\asymp j^{-\ell}$, but retain the same assumptions
on the prior and noise covariances. Then the posterior is again Gaussian
with covariance and mean given by the formulae (\ref{31}) and
(\ref{32}). The following proposition shows
that, if the noise is too smooth, then the posterior is not absolutely
continuous with respect to the prior:
\begin{proposition}
If $\beta\geq \alpha+\ell-\frac14$  then the posterior $\mu^d=N(m,\C)$ is
not absolutely continuous with respect to the prior $N(0,\tau^2\C_0)$, independently of the data $d$.
\end{proposition}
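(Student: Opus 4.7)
The plan is to invoke the Feldman–Hajek dichotomy for Gaussian measures on a separable Hilbert space: two such measures are either equivalent or mutually singular, with equivalence characterised by the three conditions (i)--(iii) used in the proof of Theorem \ref{t3.2}. Conditions (i) and (ii) involve only the Cameron--Martin norm and would still hold in the algebraic case, so the natural candidate to fail in the regime $\beta \geq \alpha+\ell-\tfrac14$ is the Hilbert--Schmidt condition (iii). I would therefore set out to show that the operator $T=I-\tau^2\mathcal{C}^{-\frac12}\mathcal{C}_0\mathcal{C}^{-\frac12}$ is \emph{not} Hilbert--Schmidt, which by Feldman--Hajek forces $\mu^d$ and $\mu_0$ to be mutually singular, regardless of $d$.

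Concretely, I would first observe that the posterior covariance formula (\ref{31}) reduces in the diagonal basis $\{\varphi_j\}$ to eigenvalues
\[
c_j=\frac{\tau^2\lambda c_{0j}c_{1j}}{c_{0j}l_j^2+\lambda c_{1j}},
\]
exactly as in (\ref{35}), with the only modification being that now $l_j\asymp j^{-\ell}$ instead of exponentially decaying. Since $T$ is diagonal in the same basis, its Hilbert--Schmidt norm is
\[
\|T\|_{HS}^2=\sum_{j=1}^{\infty}\Bigl(1-\frac{\tau^2c_{0j}}{c_j}\Bigr)^{2}
=\frac{1}{\lambda^2}\sum_{j=1}^\infty c_{0j}^2 l_j^4 c_{1j}^{-2},
\]
exactly as in the algebraic step of (\ref{314}) but without the exponential factor. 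Substituting the assumed asymptotics $c_{0j}=j^{-2\alpha}$, $c_{1j}=j^{-2\beta}$, $l_j\asymp j^{-\ell}$ yields
\[
\|T\|_{HS}^2\asymp\sum_{j=1}^{\infty}j^{4\beta-4\alpha-4\ell}.
\]

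This series diverges precisely when $4\beta-4\alpha-4\ell\geq -1$, i.e.\ when $\beta\geq\alpha+\ell-\tfrac14$, which is the stated hypothesis; hence $T$ fails to be Hilbert--Schmidt. Since the operator $T$ depends only on $\mathcal{C}_0$, $\mathcal{C}_1$, $\mathcal{L}$ and the scalars $\tau,\lambda,n$, and not on the data $d$ (the data only enters the mean $m$ via (\ref{34})), the failure of condition (iii) is data-independent. By the Feldman--Hajek dichotomy this yields that $\mu^d$ and $\mu_0$ are mutually singular for every $d$, so in particular $\mu^d$ is not absolutely continuous with respect to $\mu_0$. There is no real obstacle here: the work is entirely in identifying the correct Feldman--Hajek condition to violate and performing the elementary divergence analysis, the exponential decay in $l_j$ being the only thing that previously rescued summability.
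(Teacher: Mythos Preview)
Your proof is correct and follows essentially the same route as the paper: diagonalise $T$ in the basis $\{\varphi_j\}$, compute $\sum_j t_j^2 \asymp \sum_j j^{4\beta-4\alpha-4\ell}$, observe that this diverges precisely when $\beta\ge\alpha+\ell-\tfrac14$, and invoke the Feldman--Hajek dichotomy. Your aside that conditions (i) and (ii) ``would still hold in the algebraic case'' is unnecessary for the argument (failure of (iii) alone forces singularity) and is in fact not quite true for $\beta>\alpha+\ell$, where (i) also fails; but this plays no logical role in your proof.
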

\begin{proof}
It suffices to show that the third condition of the Feldman-Hajek theorem
fails \cite[Theorem 2.23]{DZ92}. Indeed, $\C$ is diagonalizable in the
basis $\{\varphi_j\}_{j\in\N}$ with eigenvalues $c_j$ such that
\[c_j\asymp \frac{j^{-2\alpha-2\beta}}{j^{-2\beta}+j^{-2\ell-2\alpha}}.\]
Thus, the operator $T:=I-\tau^2\C^{-\frac12}\C_0\C^{-\frac12}$ is also
diagonalizable in $\{\varphi_j\}_{j\in\N}$ with eigenvalues $t_j$,
where  \[t_j=1-\frac{\tau^2c_{0j}}{c_j}\asymp j^{-2\alpha-2\ell+2\beta}.\]
Hence, the operator $T$ is Hilbert-Schmidt, if and only if the sequence $\{t_j\}$ is
square summable, that is, if and only if $\beta<\alpha+\ell-\frac14$.
\end{proof}

\section{Posterior Contraction}\label{s:4}

In this section, we study the {limiting} behavior of the
posterior distribution {$\mu^{d^\dagger}_{\lambda,n}$}
as the noise disappears,
$n \to \infty.$ Intuitively, we expect the
mass of the posterior to concentrate in a small ball centered on the
fixed true solution. {As in \cite{ALS12, KVZ11,
KVZ12, PSZ12}, we study this
problem by identifying positive numbers $\epsilon_n$ such that, for
arbitrary positive numbers $M_n\rightarrow \infty$,} there holds
\begin{equation}\label{41}
\mathbb{E}^{d^\dagger}\mu^{d^\dagger}_{\lambda,n}\{u:
\|u-u^\dagger\|\geq M_n\epsilon_n\}\rightarrow 0.
\end{equation}
Here expectation is with respect to the random variable
$d^\dagger$, with probability distribution given by
the data likelihood $N(\mathcal{L}u^\dagger, \frac{1}{n}\mathcal{C}_1)$, and
$\epsilon_n$ is called the contraction rate of the posterior
distribution with respect to the $H$-norm.

By the Chebyshev inequality, we have
\begin{equation}\label{42}
\mathbb{E}^{d^\dagger}\mu^{d^\dagger}_{\lambda,n}\{u:
\|u-u^\dagger\|\geq
M_n\epsilon_n\}\leq\frac{1}{M_n^2\epsilon_n^2}\mathbb{E}^{d^\dagger}
{\Bigl(}\int\|u-u^\dagger\|^2\mu_{\lambda,n}^{d^\dagger}(du){\Bigr)},
\end{equation}
thus if
\begin{equation}\label{43}
\mathbb{E}^{d^\dagger}
{\Bigl(}\int\|u-u^\dagger\|^2\mu_{\lambda,n}^{d^\dagger}(du){\Bigr)}\leq
M_0\epsilon_n^2,
\end{equation}
where $M_0$ is a constant, we get
that (\ref{41}) holds as $M_n\rightarrow
\infty$. The left hand side of (\ref{43}) is the squared posterior
contraction (SPC) which satisfies
\begin{equation}\label{44}
SPC=\mathbb{E}^{d^\dagger}\|m^\dagger-u^\dagger\|^2+{\rm Tr}(\mathcal{C}),
\end{equation}
{and} therefore, it is enough to estimate the mean integrated squared
error (MISE) $\mathbb{E}^{d^\dagger}\|m^\dagger-u^\dagger\|^2$ and
the trace of the posterior covariance operator $\mathcal{C}$.

{By (\ref{34}) we have}
\[m^\dagger=\mathcal{A}d^\dagger=\mathcal{A}\mathcal{L}u^\dagger+\frac{1}{\sqrt{n}}\mathcal{A}\xi.\]
Meanwhile,
\[u^\dagger=\mathcal{A}\mathcal{L}u^\dagger+(I-\mathcal{A}\mathcal{L})u^\dagger\]
so that we get the error equation
\[e:=m^\dagger-u^\dagger=\frac{1}{\sqrt{n}}\mathcal{A}\xi+(\mathcal{A}\mathcal{L}-I)u^\dagger.\]
The first part of the error comes from the noise, while the second
part comes from the regularization. {Note that for $\lambda=0$
formally we have
\[\mathcal{A}\mathcal{L}=\mathcal{C}_0\mathcal{L}^*(\mathcal{L}^*)^{-1}
\mathcal{C}_0^{-1}\mathcal{L}^{-1}\mathcal{L}=I,\] indicating that
we can make the error $e$ small by ensuring that $\lambda \ll 1$ and $n
\gg 1$. Since $\lambda=\frac{1}{n\tau^2}$ this indicates the
possibility of an optimal choice of $\tau:=\tau(n)$ to ensure that
$\lambda=\frac{1}{n\tau(n)^2}\rightarrow 0$ as $n\rightarrow\infty$
and to balance the two sources of error. In the next three theorems,
respectively, we estimate the MISE, the trace of the covariance and
the SPC.}

\begin{theorem}[MISE]
\label{t4.1}
Under Assumption \ref{a2.1} the
MISE may be estimated as follows
\begin{eqnarray}\label{45}
\rm MISE\left\{
      \begin{array}{ll}
            \asymp\frac{1}{n\lambda}(\ln\lambda^{-\frac{1}{2s}})^{-\frac{2\alpha}{b}}+
            (\ln\lambda^{-\frac{1}{2s}})^{-\frac{2\gamma}{b}},  & b\geq 1, \\
            \lesssim\frac{1}{n\lambda}(\ln\lambda^{-\frac{1}{2s}})^{-\frac{2\alpha+b-1}{b}}+
            (\ln\lambda^{-\frac{1}{2s}})^{-\frac{2\gamma}{b}},  &
            b<1.
      \end{array}
     \right.
\end{eqnarray}
\end{theorem}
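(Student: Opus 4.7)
The starting point is the error decomposition already set up in the excerpt: $e = m^\dagger - u^\dagger = \frac{1}{\sqrt n}\mathcal{A}\xi + (\mathcal{A}\mathcal{L}-I)u^\dagger$. Since $\xi$ is zero-mean and independent of the deterministic quantity $u^\dagger$, taking the expectation of the squared $H$-norm splits the MISE into a variance and a bias contribution,
\[
\mathrm{MISE} = \frac{1}{n}\mathrm{Tr}(\mathcal{A}\mathcal{C}_1\mathcal{A}^*) + \|(I-\mathcal{A}\mathcal{L})u^\dagger\|^2.
\]
Diagonalising in the common eigenbasis $\{\varphi_j\}$ then reduces the problem to two explicit scalar series whose summands involve the quantities $c_{0j}l_j^2 \asymp j^{-2\alpha}\exp(-2sj^b)$ and $\lambda c_{1j} = \lambda j^{-2\beta}$.

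The central idea is the critical index $N=N(\lambda)$ at which the two pieces of the denominator $c_{0j}l_j^2+\lambda c_{1j}$ balance. Solving $c_{0j}l_j^2 \asymp \lambda c_{1j}$ yields $N\asymp(\ln\lambda^{-1/(2s)})^{1/b}$, the scale driving every exponent in the statement. I would split each series at $j=N$. For the variance term, for $j<N$ the denominator is of order $c_{0j}l_j^2$ and the summand is $\asymp c_{1j}/l_j^2$, while for $j>N$ it is $\asymp c_{0j}^2 l_j^2/(\lambda^2 c_{1j})$; both expressions coincide at $j\sim N$ with the common value $\lambda^{-1}N^{-2\alpha}$, producing the noise rate $\frac{1}{n\lambda}N^{-2\alpha}$. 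For the bias term I would factor $(u_j^\dagger)^2 = j^{-2\gamma}\cdot j^{2\gamma}(u_j^\dagger)^2$, pull out $\sup_j j^{-2\gamma}\bigl(\tfrac{\lambda c_{1j}}{c_{0j}l_j^2+\lambda c_{1j}}\bigr)^2$, use $\|u^\dagger\|_\gamma^2<\infty$, and check that this supremum is attained near $j=N$ with value $\asymp N^{-2\gamma}$, giving the bias rate $(\ln\lambda^{-1/(2s)})^{-2\gamma/b}$.

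The dichotomy $b\geq 1$ versus $b<1$ then controls whether the summation near $j=N$ is comparable to its largest term or picks up an additional factor. When $b\geq 1$, the ratio of consecutive summands involves $\exp(2sbj^{b-1})$, which stays bounded away from $1$; the series is geometric in character, dominated by its largest term, and retaining a single term at $j\sim N$ supplies a matching lower bound, giving the sharp $\asymp$ in the first line. When $b<1$ the same ratio tends to $1$ and a careful estimate based on $(j{+}1)^b-j^b\sim bj^{b-1}$ shows that roughly $N^{1-b}$ consecutive indices contribute comparably, producing the extra $(b-1)/b$ correction in the exponent of the second line.

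The main obstacle is this bookkeeping in the $b<1$ regime, where neither term in the denominator is sharply dominant over the relevant range of indices near $j=N$, so one cannot simply replace the summand by its leading value and must carry the geometric-series correction through explicitly. I would isolate the critical-index asymptotics and the sums of the form $\sum_{j\leq N} j^{-\rho}\exp(2sj^b)$ into preliminary lemmas (presumably the technical lemmas promised at the end of Section \ref{s:4}), keeping the main body of the argument to the decomposition, diagonalisation, and assembly outlined above.
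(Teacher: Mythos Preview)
Your proposal is correct and follows essentially the same route as the paper: the variance--bias decomposition, diagonalisation, identification of the critical index $J_\lambda\sim(\ln\lambda^{-1/(2s)})^{1/b}$, splitting each sum at $J_\lambda$, and assembling the pieces. The only minor difference is that the paper estimates the key sums $\sum_{j\leq J_\lambda} j^{-\rho}\exp(2sj^b)$ and $\sum_{j>J_\lambda} j^{\rho}\exp(-2sj^b)$ via integral comparison (Lemmas~\ref{l4.5} and~\ref{l4.6}) rather than the geometric-ratio heuristic you describe, but both arguments lead to the same $J_\lambda^{1-b}$ correction when $b<1$.
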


\begin{proof}

{Recalling $d^\dagger=\mathcal{L}u^\dagger+\frac{1}{\sqrt{n}}\xi$ and combining with the expression above for the error $e$}, since $\xi$ is centred Gaussian, we have
\begin{eqnarray}\label{46}
\mathbb{E}^{d^\dagger}\|m^\dagger-u^\dagger\|^2
=\frac{1}{n}\mathbb{E}\|\mathcal{A}\xi\|^2+\mathbb{E}^{d^\dagger}
\|(\mathcal{A}\mathcal{L}-I)u^\dagger\|^2,
\end{eqnarray}
{from which it follows that}
\begin{eqnarray}\label{47}
&&\mathbb{E}^{d^\dagger}\|m^\dagger-u^\dagger\|^2=\frac{1}{n}{\rm Tr}(\mathcal{A}\mathcal{C}_1\mathcal{A}^*)
+\|(\mathcal{A}\mathcal{L}-I)u^\dagger\|^2\nonumber\\
&&=\frac{1}{n}\sum_{j=1}^{\infty}\frac{j^{-4\alpha-2\beta}l_j^2}{(j^{-2\alpha}l_j^2+
\lambda j^{-2\beta})^2}+\sum_{j=1}^\infty
\frac{\lambda^2j^{-4\beta}(u^{\dagger}_j)^2}{(j^{-2\alpha}l_j^2+\lambda j^{-2\beta})^2}\nonumber\\
&&=\frac{1}{n\lambda^2}\sum_{j=1}^{\infty}\frac{l_j^2j^{2\beta-4\alpha}}
{(1+\frac{1}{\lambda}l_j^2j^{2\beta-2\alpha})^2}+\sum_{j=1}^{\infty}\frac{(u^{\dagger}_j)^2}
{(1+\frac{1}{\lambda}l_j^2j^{2\beta-2\alpha})^2}\nonumber\\
&&:=\textrm{I}+\textrm{II}.
\end{eqnarray}

By Assumption \ref{a2.1}, it follows that
\begin{eqnarray*}
\textrm{I}\asymp\frac{1}{n\lambda^2}\sum_{j=1}^{\infty}\frac{\exp(-2sj^b)j^{2\beta-4\alpha}}
{(1+\frac{1}{\lambda}\exp(-2sj^b)j^{2\beta-2\alpha})^2},
\end{eqnarray*}
and
\begin{eqnarray*}
\textrm{II}\asymp\sum_{j=1}^{\infty}\frac{(u^{\dagger}_j)^2}
{(1+\frac{1}{\lambda}\exp(-2sj^b)j^{2\beta-2\alpha})^2}.
\end{eqnarray*}

To estimate $\textrm{I}$ and $\textrm{II}$ we split the sum
according to the dominating term in the denominator. Define
{\[F(x;\lambda):=\frac{1}{\lambda}\exp(-2sx^b)j^{2\beta-2\alpha}, \;x\in\mathbb{R}, \lambda>0,\] }and
note that $F(1;\lambda)>1$, for $\lambda$ sufficiently small. Since
we are considering a limit in which $\lambda \to 0$ we assume that
$F(1;\lambda)>1$ henceforth. {Let $J_\lambda$ be the unique solution
of the equation $F(x;\lambda)=1$ which exceeds $1$}. By Lemma
\ref{l4.4}, we have
\begin{equation}\label{48}
J_\lambda\sim(\ln\lambda^{-\frac{1}{2s}})^{\frac{1}{b}}.
\end{equation}

For $\textrm{I}$, if $1\leq j\leq J_\lambda$,
\begin{align}\label{485}\frac{1}{\lambda}\exp(-2sj^b)j^{2\beta-2\alpha}\leq
1+\frac{1}{\lambda}\exp(-2sj^b)j^{2\beta-2\alpha}\leq
2\frac{1}{\lambda}\exp(-2sj^b)j^{2\beta-2\alpha},\end{align} therefore

\begin{eqnarray}\label{49}
\frac{1}{n\lambda^2}\sum_{j\leq
J_\lambda}\frac{\exp(-2sj^b)j^{2\beta-4\alpha}}
{(1+\frac{1}{\lambda}\exp(-2sj^b)j^{2\beta-2\alpha})^2}
\asymp\frac{1}{n}\sum_{j\leq J_\lambda}\exp(2sj^b)j^{-2\beta}.
\end{eqnarray}
{The sum on the} right hand side
is bounded from above by the integral in the same
range, and values at both endpoints.
By Lemma \ref{l4.5}, we have
\begin{eqnarray}\label{410}
&&\frac{1}{n}\sum_{j\leq
J_\lambda}\exp(2sj^b)j^{-2\beta}\nonumber\\
&&\leq\frac{1}{n}\exp(2sJ^b_\lambda)J^{-2\beta}_\lambda+\frac{1}{n}\exp(2s)+
\frac{1}{n}\int_{1}^{J_\lambda}\exp(2sx^b)x^{-2\beta}dx\nonumber\\
&&=\frac{1}{n}\exp(2sJ^b_\lambda)J^{-2\beta}_\lambda+\frac{1}{n}\exp(2s)+
\frac{M}{n}\exp(2sJ^b_\lambda)J^{-2\beta-b+1}_\lambda(1+o(1))\nonumber\\
&&=\left\{
  \begin{array}{ll}
    \frac{M}{n}\exp(2sJ^b_\lambda)J^{-2\beta}_\lambda(1+o(1)),  & b\geq 1, \\
    \frac{M}{n}\exp(2sJ^b_\lambda)J^{-2\beta-b+1}_\lambda(1+o(1)),  & b<1,
  \end{array}
\right.
\end{eqnarray}
Since \[\frac{1}{n}\sum\limits_{j\leq
J_\lambda}\exp(2sj^b)j^{-2\beta}\geq\frac{1}{n}\exp(2sJ^b_\lambda)J^{-2\beta}_\lambda,\]
{we deduce that for, $b\geq 1$},
\begin{eqnarray}\label{new1}
\frac{1}{n}\sum_{j\leq
J_\lambda}\exp(2sj^b)j^{-2\beta}\asymp\frac{1}{n}\exp(2sJ^b_\lambda)J^{-2\beta}_\lambda
=\frac{1}{n\lambda}J^{-2\alpha}_\lambda.
\end{eqnarray}
{For $0<b<1$ we have}
\begin{eqnarray}\label{new2}
\frac{1}{n}\sum_{j\leq
J_\lambda}\exp(2sj^b)j^{-2\beta}\lesssim\frac{1}{n}\exp(2sJ^b_\lambda)J^{-2\beta-b+1}_\lambda
=\frac{1}{n\lambda}J^{-2\alpha-b+1}_\lambda.
\end{eqnarray}
If $j\geq J_\lambda$, then $1\leq
1+\frac{1}{\lambda}\exp(-2sj^b)j^{2\beta-2\alpha}\leq 2$, thus we have \begin{eqnarray*}
\frac{1}{n\lambda^2}\sum_{j>J_\lambda}\frac{\exp(-2sj^b)j^{2\beta-4\alpha}}
{(1+\frac{1}{\lambda}\exp(-2sj^b)j^{2\beta-2\alpha})^2}
\asymp\frac{1}{n\lambda^2}\sum_{j>J_\lambda}\exp(-2sj^b)j^{2\beta-4\alpha}.
\end{eqnarray*}
Under our assumption on $\lambda$ being sufficiently small, we have that $J_\lambda$ is large enough so that
 $\exp(-2sj^b)j^{2\beta-4\alpha}$ is always
decreasing with respect
to $j$ {and hence} the sum on the right hand side is bounded from above by the
integral in the same range, and the value at the left endpoint.
By Lemma \ref{l4.6}, we have
\begin{eqnarray}\label{411}
&&\frac{1}{n\lambda^2}\sum_{j>J_\lambda}\exp(-2sj^b)j^{2\beta-4\alpha}\nonumber\\
&&\leq\frac{1}{n\lambda^2}\exp(-2sJ_\lambda^b)J_\lambda^{2\beta-4\alpha}
+\frac{1}{n\lambda^2}\int_{J_\lambda}^{\infty}\exp(-2sx^b)x^{2\beta-4\alpha}dx\nonumber\\
&&\leq\frac{1}{n\lambda^2}\exp(-2sJ_\lambda^b)J_\lambda^{2\beta-4\alpha}
+\frac{M}{n\lambda^2}\exp(-2sJ_\lambda^b)J_\lambda^{2\beta-4\alpha-b+1}(1+o(1))\nonumber\\
&&=\left\{
  \begin{array}{ll}
    \frac{M}{n\lambda^2}\exp(-2sJ_\lambda^b)J_\lambda^{2\beta-4\alpha}(1+o(1)),  & b\geq 1, \\
    \frac{M}{n\lambda^2}\exp(-2sJ_\lambda^b)J_\lambda^{2\beta-4\alpha-b+1}(1+o(1)),  & b<1.
  \end{array}
\right.
\end{eqnarray}
Since
$\frac{1}{n\lambda^2}\sum\limits_{j>J_\lambda}\exp(-2sj^b)j^{2\beta-4\alpha}\geq
\frac{1}{n\lambda^2}\exp(-2sJ_\lambda^b)J_\lambda^{2\beta-4\alpha}$,
for $b\geq 1$, we have
\begin{eqnarray}
\frac{1}{n\lambda^2}\sum\limits_{j>J_\lambda}\exp(-2sj^b)j^{2\beta-4\alpha}\asymp
\frac{1}{n\lambda^2}\exp(-2sJ_\lambda^b)J_\lambda^{2\beta-4\alpha}
=\frac{1}{n\lambda}J_\lambda^{-2\alpha},
\end{eqnarray}
and for $0<b<1$,
\begin{align}
\frac{1}{n\lambda^2}\sum\limits_{j>J_\lambda}\exp(-2sj^b)j^{2\beta-4\alpha}
\lesssim\frac{1}{n\lambda^2}\exp(-2sJ_\lambda^b)J_\lambda^{2\beta-4\alpha-b+1}
=\frac{1}{n\lambda}J_\lambda^{-2\alpha-b+1}.
\end{align}

{To estimate $\textrm{II}$, we employ an analysis similar to
that applied to $\textrm{I}$}. By (\ref{485}) we have
\begin{eqnarray}\label{412}
&&\sum_{j\leq J_\lambda}\frac{(u^{\dagger}_j)^2}
{(1+\frac{1}{\lambda}\exp(-2sj^b)j^{2\beta-2\alpha})^2}\asymp\sum_{j\leq
J_\lambda}(u^{\dagger}_j)^2
\lambda^2\exp(4sj^b)j^{4\alpha-4\beta}\nonumber\\
&&=\sum_{j\leq J_\lambda}j^{2\gamma}(u^{\dagger}_j)^2
\lambda^2\exp(4sj^b)j^{4\alpha-4\beta-2\gamma}.
\end{eqnarray}
For $\lambda$ small enough, the terms
$\exp(4sj^b)j^{4\alpha-4\beta-2\gamma}$ for $1\leq j\leq J_\lambda$
are dominated by
$\exp(4sJ_\lambda^b)J_\lambda^{4\alpha-4\beta-2\gamma}$, so we have
the following upper bound for the sum (\ref{412}):
\begin{eqnarray*}
\sum_{j\leq J_\lambda}j^{2\gamma}(u^{\dagger}_j)^2
\lambda^2\exp(4sj^b)j^{4\alpha-4\beta-2\gamma}
\leq\lambda^2\exp(4sJ_\lambda^b)J_\lambda^{4\alpha-4\beta-2\gamma}\|u^\dagger\|_\gamma^2.
\end{eqnarray*}
{Furthermore}
\begin{equation*}
\sum\limits_{j\leq J_\lambda}j^{2\gamma}(u^{\dagger}_j)^2
\lambda^2\exp(4sj^b)j^{4\alpha-4\beta-2\gamma}\geq
(u^\dagger_{J_\lambda})^2\lambda^2\exp(4sJ_\lambda^b)J_\lambda^{4\alpha-4\beta-2\gamma},
\end{equation*}
{implying that, since $\gamma>0$ and $u \in H^{\gamma}$,}
\begin{equation}\label{413}
\sum_{j\leq J_\lambda}j^{2\gamma}(u^{\dagger}_j)^2
\lambda^2\exp(4sj^b)j^{4\alpha-4\beta-2\gamma}\asymp
\lambda^2\exp(4sJ_\lambda^b)J_\lambda^{4\alpha-4\beta-2\gamma}
=J_\lambda^{-2\gamma}.
\end{equation}
The other part of the sum $\textrm{II}$ satisfies
\begin{eqnarray*}
\sum_{j>J_\lambda}\frac{(u^{\dagger}_j)^2}
{(1+\frac{1}{\lambda}\exp(-2sj^b)j^{2\beta-2\alpha})^2}\asymp\sum_{j>
J_\lambda}(u^{\dagger}_j)^2=\sum_{j>J_\lambda}j^{2\gamma}(u^{\dagger}_j)^2j^{-2\gamma}.
\end{eqnarray*}
It {follows} that
\begin{eqnarray}\label{414}
\sum_{j>J_\lambda}j^{2\gamma}(u^{\dagger}_j)^2j^{-2\gamma}\asymp
J_\lambda^{-2\gamma},
\end{eqnarray}
since $u \in H^{\gamma}.$

Combining (\ref{46}) - (\ref{414}) completes the proof.
\end{proof}

\begin{theorem}[Trace of $\mathcal{C}$]
\label{t4.2}

{Let Assumption \ref{a2.1} hold and consider
the posterior covariance operator
$\mathcal{C}$ given by (\ref{26}), with $\lambda$ as in}
(\ref{28}). Then the trace is estimated as
\begin{equation}\label{415}
{\rm Tr}(\mathcal{C})\asymp\frac{1}{n\lambda}(\ln\lambda^{-\frac{1}{2s}})^{-\frac{2\alpha-1}{b}}.
\end{equation}
\end{theorem}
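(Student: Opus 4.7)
The plan is to compute $\mathrm{Tr}(\mathcal{C})=\sum_j c_j$ directly from the eigenvalue formula (\ref{35}), and split the sum at the same crossover index $J_\lambda$ already used in the proof of Theorem \ref{t4.1}. Since $\tau^2\lambda=1/n$, formula (\ref{35}) gives
\[
c_j \asymp \frac{1}{n\lambda}\cdot\frac{j^{-2\alpha}}{1+\lambda^{-1}\exp(-2sj^b)j^{2\beta-2\alpha}},
\]
so $\mathrm{Tr}(\mathcal{C})\asymp \tfrac{1}{n\lambda}\sum_j \frac{j^{-2\alpha}}{1+F(j;\lambda)}$ with $F$ as defined just before (\ref{48}).

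For the low-frequency part $j\leq J_\lambda$, where $F(j;\lambda)\geq 1$, the two-sided bound (\ref{485}) gives
$\frac{j^{-2\alpha}}{1+F(j;\lambda)}\asymp \lambda \exp(2sj^b)j^{-2\beta}$, so this part contributes a quantity of order $\lambda\sum_{j\leq J_\lambda}\exp(2sj^b)j^{-2\beta}$. I would estimate it by exactly the argument used for the term $\mathrm{I}$ in Theorem \ref{t4.1} (integral bounds via Lemma \ref{l4.5}), yielding an upper bound of order $\lambda\exp(2sJ_\lambda^b)J_\lambda^{-2\beta}$ when $b\geq 1$ and $\lambda\exp(2sJ_\lambda^b)J_\lambda^{-2\beta-b+1}$ when $b<1$. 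Using the defining identity $\lambda^{-1}\exp(-2sJ_\lambda^b)J_\lambda^{2\beta-2\alpha}=1$, both of these simplify to $J_\lambda^{-2\alpha}$ and $J_\lambda^{-2\alpha-b+1}$ respectively.

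For the high-frequency part $j>J_\lambda$, where $1+F(j;\lambda)\asymp 1$, one gets $\frac{j^{-2\alpha}}{1+F(j;\lambda)}\asymp j^{-2\alpha}$. Because $\alpha>\tfrac12$, standard integral comparison yields
\[
\sum_{j>J_\lambda} j^{-2\alpha}\asymp J_\lambda^{-(2\alpha-1)}.
\]
Comparing the two contributions, the tail piece $J_\lambda^{-(2\alpha-1)}$ beats $J_\lambda^{-2\alpha}$ by a factor $J_\lambda$ and beats $J_\lambda^{-2\alpha-b+1}$ by a factor $J_\lambda^{b}$; both ratios diverge as $\lambda\to 0$, so the high-frequency sum dominates and the $b\geq 1$ vs.\ $b<1$ dichotomy present in Theorem \ref{t4.1} disappears. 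Substituting the asymptotics (\ref{48}), $J_\lambda\sim(\ln\lambda^{-1/(2s)})^{1/b}$, delivers the claimed rate.

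The main obstacle is essentially bookkeeping: one has to verify both a matching lower bound and an upper bound on the sum in a single stroke. The lower bound is immediate because the tail part alone gives a two-sided estimate of order $\tfrac{1}{n\lambda}J_\lambda^{-(2\alpha-1)}$, while for the upper bound one has to check that the contribution of $j\leq J_\lambda$ is genuinely of strictly smaller order, which as noted above follows from $\alpha>\tfrac12$ and $b>0$. No new technical tools beyond those in the proof of Theorem \ref{t4.1} (in particular Lemmas \ref{l4.5} and \ref{l4.4}) are required.
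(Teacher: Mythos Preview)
Your proposal is correct and follows essentially the same route as the paper: the same eigenvalue formula, the same split at $J_\lambda$, the same use of (\ref{485}) and Lemma \ref{l4.5} for the low-frequency block, and the same tail estimate $\sum_{j>J_\lambda}j^{-2\alpha}\asymp J_\lambda^{-(2\alpha-1)}$. Your explicit comparison showing the tail dominates the head (by factors $J_\lambda$ or $J_\lambda^b$) is a bit more spelled out than the paper, which simply combines (\ref{416})--(\ref{418}) and leaves that observation implicit, but the argument is the same.
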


\begin{proof}

From (\ref{33}) and (\ref{35}) we have
\begin{eqnarray}\label{416}
{\rm Tr}(\mathcal{C})=\sum_{j=1}^{\infty}\frac{\tau^2\lambda
c_{0j}c_{1j}}{c_{0j}l_j^2+\lambda
c_{1j}}\asymp\frac{1}{n\lambda}\sum_{j=1}^{\infty}\frac{j^{-2\alpha}}{1+\frac{1}{\lambda}
\exp(-2sj^b)j^{2\beta-2\alpha}}.
\end{eqnarray}
As in the proof of Theorem \ref{t4.1} we split the sum according to the
dominating term in the denominator. For the first part, using equation (\ref{485}), we have
\begin{eqnarray}\label{417}
\frac{1}{n\lambda}\sum_{j\leq
J_\lambda}\frac{j^{-2\alpha}}{1+\frac{1}{\lambda}\exp(-2sj^b)j^{2\beta-2\alpha}}
\asymp\frac{1}{n}\sum_{j\leq J_\lambda}\exp(2sj^b)j^{-2\beta},
\end{eqnarray}
{where the behaviour of the right hand side is given by equations (\ref{new1}) and (\ref{new2}).}
The other part of the sum on the right hand side of (\ref{416}) satisfies
\begin{eqnarray*}
\frac{1}{n\lambda}\sum_{j>J_\lambda}\frac{j^{-2\alpha}}{1+\frac{1}{\lambda}\exp(-2sj^b)j^{2\beta-2\alpha}}
\asymp\frac{1}{n\lambda}\sum_{j>J_\lambda}j^{-2\alpha}.
\end{eqnarray*}
By \cite[Lemma 6.2]{KVZ12}, the last sum can be {estimated} as
\begin{eqnarray*}
\sum_{j>J_\lambda}j^{-2\alpha}\asymp J_\lambda^{-2\alpha+1},
\end{eqnarray*}
hence
\begin{eqnarray}\label{418}
\frac{1}{n\lambda}\sum_{j>J_\lambda}\frac{j^{-2\alpha}}{1+\frac{1}{\lambda}\exp(-2sj^b)j^{2\beta-2\alpha}}
\asymp\frac{1}{n\lambda}J_\lambda^{-2\alpha+1}.
\end{eqnarray}

Combining (\ref{48}), (\ref{416})-(\ref{418}) completes the
proof.
\end{proof}

{We combine the two preceding theorems to determine
the overall contraction rate.}

\begin{theorem}[Rate of Contraction]
\label{t4.3}
Suppose that Assumption \ref{a2.1} holds, $\lambda$ is given by (\ref{28}) and $\tau(n)>0$
satisfies $n\tau^2(n)\rightarrow \infty$. Then the posterior
distribution $\mu_{\lambda,n}^{d^\dagger}$ contracts around the true
solution $u^\dagger$ at the rate
\begin{equation}\label{419}
\epsilon_n=\big(\ln(n\tau^2{(n)})\big)^{-\frac{\gamma}{b}}+
\tau(n)\big(\ln(n\tau^2{(n)})\big)^{-\frac{\alpha-\frac{1}{2}}{b}}.
\end{equation}
In particular, since the rate is undetermined up to a multiplicative constant independent of $n$, we may take
\begin{eqnarray}\label{420}
\epsilon_n=\left\{
      \begin{array}{ll}
            \Big(\ln n\Big)^{-\frac{\gamma\wedge(\alpha-\frac{1}{2})}{b}},
            & \tau(n)\equiv 1,\\
            \Big(\ln n\Big)^{-\frac{\gamma}{b}},
            & n^{-\frac{1}{2}+\sigma}\lesssim\tau(n)\lesssim (\ln
            n)^{\frac{\alpha-\gamma-\frac{1}{2}}{b}},
      \end{array}
     \right.
\end{eqnarray}
where $\sigma>0$ is some constant.
\end{theorem}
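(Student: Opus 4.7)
The plan is to combine the two previous theorems through the framework set up at the start of Section \ref{s:4}. By the Chebyshev bound (\ref{42}), it suffices to establish that the squared posterior contraction
\[SPC=\mathbb{E}^{d^\dagger}\|m^\dagger-u^\dagger\|^2+{\rm Tr}(\mathcal{C})\]
is bounded above by a constant multiple of $\epsilon_n^2$. Thus I simply add the estimate for $\mathbb{E}^{d^\dagger}\|m^\dagger-u^\dagger\|^2$ from Theorem \ref{t4.1} to the estimate for ${\rm Tr}(\mathcal{C})$ from Theorem \ref{t4.2}, substitute the relation $\lambda=\frac{1}{n\tau^2(n)}$ from (\ref{28}), and take square roots.

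The key observation in step two is that $\frac{1}{n\lambda}=\tau^2(n)$ and $\ln\lambda^{-\frac{1}{2s}}=\frac{1}{2s}\ln(n\tau^2(n))$, so that logarithmic factors appearing in Theorems \ref{t4.1} and \ref{t4.2} are, up to multiplicative constants, powers of $\ln(n\tau^2(n))$. Comparing exponents, the ``variance'' (noise) contribution to the MISE scales like $\tau^2(n)\bigl(\ln(n\tau^2(n))\bigr)^{-2\alpha/b}$ (or with the exponent $-(2\alpha+b-1)/b$ when $b<1$), whereas the trace of the posterior covariance scales like $\tau^2(n)\bigl(\ln(n\tau^2(n))\bigr)^{-(2\alpha-1)/b}$. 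Since the trace decays strictly slower, the noise piece of the MISE is absorbed into the trace, and I obtain
\[SPC\lesssim \tau^2(n)\bigl(\ln(n\tau^2(n))\bigr)^{-\frac{2\alpha-1}{b}}+\bigl(\ln(n\tau^2(n))\bigr)^{-\frac{2\gamma}{b}}.\]
Taking the square root yields (\ref{419}), after noting that $\sqrt{a+b}\leq \sqrt{a}+\sqrt{b}$ and that the rate is defined up to a multiplicative constant. The hypothesis $n\tau^2(n)\to\infty$ is used here to guarantee that $\lambda\to 0$ so that Theorems \ref{t4.1} and \ref{t4.2} indeed apply.

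For the explicit formulas in (\ref{420}), I specialize the general rate. With $\tau(n)\equiv 1$, both summands in (\ref{419}) are purely logarithmic in $n$ (since $\ln(n\tau^2(n))=\ln n$), and the dominant one has exponent $-\frac{\gamma\wedge(\alpha-\frac{1}{2})}{b}$. For the second case, the lower bound $\tau(n)\gtrsim n^{-\frac{1}{2}+\sigma}$ guarantees $n\tau^2(n)\to\infty$ and also ensures $\ln(n\tau^2(n))\asymp \ln n$, while the upper bound $\tau(n)\lesssim (\ln n)^{\frac{\alpha-\gamma-\frac{1}{2}}{b}}$ is exactly the condition that forces the second term in (\ref{419}) to be dominated by the first, so the rate reduces to $(\ln n)^{-\gamma/b}$.

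The whole argument is essentially bookkeeping, so I do not anticipate a genuine obstacle; the only step requiring a moment's care is the verification that the ``noise'' part of the MISE is always dominated by ${\rm Tr}(\mathcal{C})$ in both the regimes $b\geq 1$ and $0<b<1$, which is immediate once one compares the exponents $-\frac{2\alpha}{b}$ (respectively $-\frac{2\alpha+b-1}{b}$) against $-\frac{2\alpha-1}{b}$ and notes the difference is strictly positive.
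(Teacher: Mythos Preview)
Your proof is correct and follows essentially the same route as the paper's: combine the SPC decomposition (\ref{44}) with Theorems \ref{t4.1} and \ref{t4.2}, substitute $\lambda=\frac{1}{n\tau^2(n)}$, and then specialize. You have in fact been more explicit than the paper about why the noise component of the MISE is absorbed into ${\rm Tr}(\mathcal{C})$ (the paper just says the estimate ``follows by combining''), and your explanation of the two cases in (\ref{420}) matches the paper's computation while making the logic---that the lower bound on $\tau(n)$ forces $\ln(n\tau^2(n))\asymp \ln n$ and the upper bound forces the second summand below the first---more transparent.
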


\begin{proof}

The estimate (\ref{419}) follows by combining (\ref{44}), Theorem
\ref{t4.1} and Theorem \ref{t4.2}. The rate for $\tau(n)\equiv1$ follows immediately. 
{In the case of varying $\tau(n)$, observe that in order to balance the contributions of the two terms in (\ref{419}), $\tau(n)$ needs to be large enough so that $n\tau^2(n)\rightarrow\infty$ as $n\rightarrow\infty$, but small enough so that the second term is bounded by the first one. Since the function $\big(\ln(\cdot)\big)^{-\kappa}$, $\kappa>0$ is 
decreasing, this can be achieved by choosing $n^{-\frac{1}{2}+\sigma}\lesssim\tau(n)\lesssim (\ln n)^{\frac{\alpha-\gamma-\frac{1}{2}}{b}}$ for some constant $\sigma>0$, 
in which case the rate becomes
\begin{eqnarray*}
\epsilon_n&&\lesssim\big(\ln(n\cdot n^{-1+2\sigma})\big)^{-\frac{\gamma}{b}}+ 
(\ln n)^{\frac{\alpha-\gamma-\frac{1}{2}}{b}}\big(\ln(n\cdot n^{-1+2\sigma})\big)^{-\frac{\alpha-\frac{1}{2}}{b}}\nonumber\\
&&=\big(2\sigma\big)^{-\frac{\gamma}{b}}\big(\ln n\big)^{-\frac{\gamma}{b}}+(2\sigma)^{-\frac{\alpha-\frac{1}{2}}{b}}\big(\ln n\big)^{-\frac{\gamma}{b}}\nonumber\\
&&\lesssim \Big(\ln n\Big)^{-\frac{\gamma}{b}}.
\end{eqnarray*}
This completes the proof.}
\end{proof}

\begin{remark}\label{ch2:rem}

\item[i)]The rate of the MISE is determined by the regularity of the prior $\alpha$,
the regularity of the truth $\gamma$ and the degree of ill-posedness
as determined by the power $b$ in the eigenvalues of $\Lc$ ($s$ does
not affect the rate). On the other hand, the rate of the trace of
the posterior covariance is determined by $\alpha$ and $b$ and has
nothing to do with the regularity of the truth $\gamma$. Finally the
rate of contraction is determined by $\alpha, \gamma$ and $b$.
Observe that the regularity of the noise, $\beta$, does not affect
the rate. In the case of mildly ill-posed problems where the
singular values of ${\mathcal L}$ decay algebraically $\beta$ does
appear in the error estimates, but only through the difference in
regularity between the forward operator and the noise covariance
\cite{ALS12}. For our severely ill-posed problem this difference may
be thought of as being infinite, explaining why $\beta$ disappears
from the error estimates here.

\item[ii)] For fixed $\tau=1$, the
rate of contraction is $ \Big(\ln
n\Big)^{-\frac{\gamma\wedge(\alpha-\frac{1}{2})}{b}}$, that is, as
$\gamma$ grows the rate improves until $\gamma=\alpha-\frac12$, at
which point the rate saturates at $ \Big(\ln
n\Big)^{-\frac{\alpha-\frac{1}{2}}{b}}$. {Note that the saturation point $\gamma=\alpha-\frac12$ is also the crossover point between the true solution being in the support of the prior (prior oversmoothing) or not (prior undersmoothing).} On the contrary, for
$n^{-\frac{1}{2}+\sigma}\lesssim\tau\lesssim (\ln
            n)^{\frac{\alpha-\gamma-\frac{1}{2}}{b}}$ the rate is
            $(\ln n)^{-\frac{\gamma}b}$ and never saturates.
\item[iii)] For the appropriate choice of $\tau=\tau(n)$ the contraction
rate is $\epsilon_n=(\ln n)^{-\frac{\gamma}b},$ which is optimal in the minimax sense with $L^2$-loss \cite{Cav08,
KVZ12}. The minimax rate is also achieved if we have fixed $\tau\equiv1,$ provided the prior is oversmoothing, $\gamma\leq\alpha-\frac12$.
\end{remark}

{
We conclude the section with several technical lemmas
used in the proof of the preceding theorems.}

\begin{lemma}
\label{l4.4}

Let $a,b>0$ and $t\in\mathbb{R}$ be constants. For all
$\lambda$ sufficiently small the equation
\begin{equation}\label{421}
\frac{1}{\lambda}\exp(-ax^b)x^{t}=1,
\end{equation}
has a unique solution $J_{\lambda}$ in  $\{x \ge 1\}$
and
$J_\lambda\sim(\ln\lambda^{-\frac{1}{a}})^{\frac{1}{b}}$ as
$\lambda\rightarrow 0$.

\end{lemma}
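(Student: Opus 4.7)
The plan is to analyze the function $f(x):=\exp(-ax^b)x^{t}$ on $[1,\infty)$ and rewrite the equation as $f(x)=\lambda$. I would first establish existence and uniqueness by a monotonicity analysis, then extract the asymptotic from the implicit equation obtained by taking logarithms.

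For existence and uniqueness, compute $f'(x)/f(x)=(t-abx^{b})/x$. When $t\leq 0$, or when $t>0$ and $x^{\ast}:=(t/(ab))^{1/b}\leq 1$, the function $f$ is strictly decreasing on $[1,\infty)$; otherwise $f$ is strictly increasing on $[1,x^{\ast}]$ and strictly decreasing on $[x^{\ast},\infty)$ to zero. Since $f(1)=\exp(-a)$ is a fixed positive constant and $\lambda\to 0$, for $\lambda$ sufficiently small one has $\lambda<f(1)\leq f(x^{\ast})$; hence on the increasing branch (when it exists) $f(x)\geq f(1)>\lambda$, ruling out spurious solutions there, while on the decreasing branch the intermediate value theorem and strict monotonicity yield a unique $J_{\lambda}\in[1,\infty)$ with $f(J_{\lambda})=\lambda$.

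For the asymptotic, I would take logarithms in $f(J_{\lambda})=\lambda$ to obtain
$$aJ_{\lambda}^{b}-t\ln J_{\lambda}=\ln\lambda^{-1}.$$
First one argues $J_{\lambda}\to\infty$ as $\lambda\to 0$: if $J_{\lambda}$ stayed bounded along some sequence $\lambda_k\to 0$, the left-hand side would remain bounded while the right-hand side diverges, a contradiction. Given $J_{\lambda}\to\infty$, one has $\ln J_{\lambda}=o(J_{\lambda}^{b})$, so for $\lambda$ small $|t\ln J_{\lambda}|\leq \tfrac{a}{2}J_{\lambda}^{b}$, which combined with the displayed equation yields the rough two-sided bound $\tfrac{a}{2}J_{\lambda}^{b}\leq \ln\lambda^{-1}\leq \tfrac{3a}{2}J_{\lambda}^{b}$. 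Hence $J_{\lambda}^{b}\asymp\ln\lambda^{-1}$, and therefore $\ln J_{\lambda}=O(\ln\ln\lambda^{-1})=o(\ln\lambda^{-1})$.

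Dividing the displayed equation by $\ln\lambda^{-1}$ and inserting the previous step,
$$\frac{aJ_{\lambda}^{b}}{\ln\lambda^{-1}}=1+\frac{t\ln J_{\lambda}}{\ln\lambda^{-1}}\longrightarrow 1,$$
so $J_{\lambda}^{b}\sim \tfrac{1}{a}\ln\lambda^{-1}=\ln\lambda^{-1/a}$ and thus $J_{\lambda}\sim(\ln\lambda^{-1/a})^{1/b}$, as claimed. I do not anticipate serious obstacles; the only mild care is the sign-of-$t$ bookkeeping in uniqueness, where the factor $x^{t}$ can create an interior maximum when $t>0$, but the gap between $\lambda$ and the fixed value $f(1)=e^{-a}$ as $\lambda\to 0$ disposes of this.
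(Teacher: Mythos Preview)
Your proposal is correct and follows essentially the same approach as the paper: both arguments take logarithms of the defining equation and exploit that $\ln J_\lambda$ is negligible compared to $J_\lambda^{b}$ as $J_\lambda\to\infty$. Your treatment is somewhat more detailed---the paper divides the logged equation by $aJ_\lambda^{b}$ directly to obtain $1=\tfrac{\ln\lambda^{-1/a}}{J_\lambda^{b}}+\tfrac{t}{a}\tfrac{\ln J_\lambda}{J_\lambda^{b}}$ and then passes to the limit, whereas you insert an intermediate bootstrap to first pin down $J_\lambda^{b}\asymp\ln\lambda^{-1}$---but the underlying idea and the uniqueness analysis (at most one interior maximum, value at $x=1$ exceeding $\lambda$) are the same.
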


\begin{proof}
Uniqueness of a root in $\{x \ge 1\}$ follows automatically provided
\[\lambda^{-1}\exp(-a)>1,\] since $x \mapsto \exp(-ax^b)x^{t}$ has at
most one maximum in $\{x \ge 0\}.$ From (\ref{421}), it is easy to
{see} that
\begin{equation*}
1=\frac{\ln\lambda^{-\frac{1}{a}}}{J_\lambda^b}+\frac{t}{a}\frac{\ln
J_\lambda}{J_\lambda^b}.
\end{equation*}
{Since we are looking for solutions in $\{x\geq 1\}$, we have that $\ln{J_\lambda}\geq0$ hence $J_\lambda\rightarrow\infty$ as $\lambda\rightarrow
0$.} This implies
$1\sim\frac{\ln\lambda^{-\frac{1}{a}}}{J_\lambda^b}$, which
completes the proof.
\end{proof}

\begin{lemma}
\label{l4.5}
{
For $a>0, b>0$ and $ c\in\mathbb{R}$, we have as
$J\rightarrow\infty$,
\begin{equation}\label{422}
\int_1^J\exp(ax^b)x^{c}dx\sim\frac{1}{ab}\exp(aJ^b)J^{c-b+1}.
\end{equation}}
\end{lemma}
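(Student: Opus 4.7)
The plan is to apply L'Hopital's rule to the ratio $f(J)/g(J)$, where
\[f(J) := \int_1^J\exp(ax^b)x^c\,dx, \qquad g(J) := \frac{1}{ab}\exp(aJ^b)J^{c-b+1}.\]
The motivation for this particular choice of $g$ is the identity $\frac{d}{dx}\exp(ax^b) = ab\,x^{b-1}\exp(ax^b)$: an informal integration by parts suggests that $g(J)$ is the leading-order primitive of the integrand, and that all remaining contributions should be lower order.

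First I would verify the hypotheses of L'Hopital's rule for the $\infty/\infty$ indeterminate form. Since $a,b>0$, clearly $g(J)\to\infty$ as $J\to\infty$. For $f$, the integrand $\exp(ax^b)x^c$ tends to $\infty$ as $x\to\infty$ for every $c\in\mathbb{R}$ (the exponential dominates any power, regardless of the sign of $c$), so the integral diverges as well. By the fundamental theorem of calculus $f'(J)=\exp(aJ^b)J^c$, and an explicit differentiation yields
\[g'(J)=\exp(aJ^b)J^c+\frac{c-b+1}{ab}\exp(aJ^b)J^{c-b}=\exp(aJ^b)J^c\Bigl(1+\frac{c-b+1}{ab\,J^b}\Bigr).\]
In particular $g'(J)>0$ for all sufficiently large $J$, and
\[\frac{f'(J)}{g'(J)}=\Bigl(1+\frac{c-b+1}{ab\,J^b}\Bigr)^{-1}\longrightarrow 1 \quad\text{as } J\to\infty,\]
where we used $b>0$ in the last step to ensure $J^{-b}\to 0$.

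Applying L'Hopital's rule then gives $f(J)/g(J)\to 1$, which is precisely the asymptotic equivalence (\ref{422}). I do not foresee any substantive obstacle; the only minor point requiring attention is the divergence of $f$ when $c$ is very negative, but this follows immediately from the fact that $\exp(ax^b)$ grows faster than any polynomial. If one preferred to avoid L'Hopital, the same conclusion could be reached by iterating the integration by parts obtained from writing $\exp(ax^b)x^c = \frac{1}{ab}x^{c-b+1}\frac{d}{dx}\exp(ax^b)$, with each successive term gaining an extra factor of $J^{-b}$ relative to the previous one.
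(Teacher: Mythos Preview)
Your proof is correct and considerably more direct than the paper's. The paper proceeds by substituting $y=x^b$ and integrating by parts once to write
\[
\int_1^J\exp(ax^b)x^{c}\,dx=\frac{1}{ab}\bigl(\exp(aJ^b)J^{c-b+1}-\exp(a)\bigr)-\frac{c-b+1}{ab^2}\int_1^{J^b}\exp(ay)y^{\frac{c-2b+1}{b}}\,dy,
\]
and then must show that the remainder integral is $o\bigl(\exp(aJ^b)J^{c-b+1}\bigr)$. This requires a case split on the sign of $c-2b+1$: when it is nonnegative a direct pointwise bound suffices, but when it is negative a further substitution $z=e^{ay}$ and a separate technical lemma (bounding $\int_{e^a}^x(\ln z)^{-q}\,dz$) are invoked. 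Your L'Hopital argument sidesteps all of this: both cases are handled uniformly, no auxiliary lemma is needed, and the computation is a single line once the derivatives are written down. The paper's approach has the minor advantage of yielding an explicit remainder term, which could in principle give a rate in the $o(1)$, but this is not used anywhere in the paper, so your route loses nothing of substance.
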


\begin{proof}

{By variable substitution $x^b=y$ and integration by parts, we get
\begin{eqnarray}\label{423}
&&\int_1^J\exp(ax^b)x^{c}dx\nonumber\\
&&=\frac{1}{ab}(\exp(aJ^b)J^{c-b+1}-\exp(a))-
\frac{c-b+1}{ab^2}\int_1^{J^b}\exp(ay)y^{\frac{c-2b+1}{b}}dy,\nonumber\\
\end{eqnarray}
thus letting $I(J):=\int_1^{J^b}\exp(ay)y^{\frac{c-2b+1}{b}}dy$, we have that it suffices to show that \begin{align}\label{ch2:clim}\lim_{J\to\infty}\frac{I(J)}{\exp(aJ^b)J^{c-b+1}}=0.\end{align}
Indeed, if $c-2b+1\geq0$ then we have
\begin{equation*}
\frac{\exp(ay)y^{\frac{c-2b+1}{b}}}{\exp(aJ^b)J^{c-b+1}}\leq\exp(a(y-J^b))J^{-b},
\end{equation*}
and (\ref{ch2:clim}) holds.
If $c-2b+1<0$, we use the variable substitution $e^{ay}=z$ to get that
\begin{align*}I(J)=\frac{1}{a^\frac{c-b+1}{b}}\int_{e^a}^{e^{aJ^b}}(\ln(z))^{\frac{c-2b+1}{b}}dz.\end{align*} By Lemma \ref{ch2:clem} below, we then have that \[I(J)\lesssim{\exp(aJ^b)}{J^{c-2b+1}},\]
hence (\ref{ch2:clim}) holds.}
\end{proof}

\begin{lemma}
\label{l4.6}
For $J>0, a>0, b>0$ and $c\in\mathbb{R}$ we have
\begin{equation}\label{424}
\int_J^\infty\exp(-ax^b)x^{c}dx\lesssim\exp(-aJ^b)J^{c-b+1}.
\end{equation}
\end{lemma}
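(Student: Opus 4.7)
My approach is a single integration by parts that extracts the leading boundary term, followed by a self-improving bound that absorbs the remainder integral.

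First I would rewrite the integrand using $\frac{d}{dx}\exp(-ax^b)=-abx^{b-1}\exp(-ax^b)$ as
\[
\exp(-ax^b)x^c=-\frac{x^{c-b+1}}{ab}\,\frac{d}{dx}\exp(-ax^b).
\]
Integrating by parts on $[J,\infty)$, the boundary term at infinity vanishes since $\exp(-ax^b)$ decays faster than any polynomial for $b>0$, so
\[
\int_J^\infty\exp(-ax^b)x^c\,dx=\frac{1}{ab}J^{c-b+1}\exp(-aJ^b)+\frac{c-b+1}{ab}\int_J^\infty\exp(-ax^b)x^{c-b}\,dx.
\]

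Next, because $x\ge J$ and $b>0$ force $x^{-b}\le J^{-b}$, we have $x^{c-b}\le J^{-b}x^c$ on $[J,\infty)$, so the remainder integral satisfies
\[
\int_J^\infty\exp(-ax^b)x^{c-b}\,dx\le J^{-b}\int_J^\infty\exp(-ax^b)x^c\,dx.
\]
Substituting back and rearranging yields
\[
\Bigl(ab-\frac{|c-b+1|}{J^b}\Bigr)\int_J^\infty\exp(-ax^b)x^c\,dx\le J^{c-b+1}\exp(-aJ^b).
\]
For $J^b\ge 2|c-b+1|/(ab)$, the coefficient on the left is at least $ab/2>0$, proving the claim with implicit constant $2/(ab)$. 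The lemma is applied in Theorem \ref{t4.1} only with $J=J_\lambda\to\infty$, so the restriction to large $J$ is harmless; for any bounded range of $J$ bounded away from $0$ and $\infty$ the asserted inequality is trivial by continuity and positivity of both sides, and can be absorbed into the implicit constant.

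The main (minor) obstacle is checking that the boundary term at infinity genuinely vanishes and that the sign of $c-b+1$ does not spoil the self-improvement; both are dispatched uniformly by taking absolute values when bounding the remainder, which is why the same proof works for every $c\in\mathbb R$.
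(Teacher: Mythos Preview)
Your proof is correct. Both you and the paper start from the same integration-by-parts identity
\[
\int_J^\infty\exp(-ax^b)x^c\,dx=\frac{1}{ab}J^{c-b+1}\exp(-aJ^b)+\frac{c-b+1}{ab}\int_J^\infty\exp(-ax^b)x^{c-b}\,dx,
\]
but you diverge in how the remainder is handled. The paper iterates the integration by parts, reducing the exponent by $b$ each time, until after finitely many steps the coefficient $\frac{c-nb+1}{ab}$ turns negative, at which point the remaining integral is discarded. Your argument instead does a single self-referential step: bound $x^{c-b}\le J^{-b}x^c$ on $[J,\infty)$ so that the remainder is at most $J^{-b}$ times the original integral, then absorb it on the left for $J$ large. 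Your route is slightly slicker in that it treats all $c\in\mathbb R$ uniformly without counting how many iterations are needed, at the cost of an explicit large-$J$ threshold (which, as you note, is all the application requires and is in any case swallowed by the $\lesssim$). The paper's iterated version has the minor advantage that once the coefficient is negative the inequality holds for every $J>0$, not just large $J$, but this is immaterial here.
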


\begin{proof}
By variable substitution $x^b=y$ and integration by parts, we have
\begin{eqnarray*}
&&\int_J^\infty\exp(-ax^b)x^{c}dx\nonumber\\
&&=\frac{1}{ab}\exp(-aJ^b)J^{c-b+1}+\frac{c-b+1}{ab^2}\int_{J^b}^\infty\exp(-ay)y^{\frac{c-2b+1}{b}}dy.
\end{eqnarray*}
If $\frac{c-b+1}{ab^2}>0$, then we integrate by parts for $n$ times
until $\frac{c-nb+1}{ab^2}<0$ for the first time. When the constant
in front of the integral finally becomes negative we can ignore the
integral on the right hand side to get
\begin{eqnarray*}
\int_J^\infty\exp(-ax^b)x^{c}dx\leq\frac{1}{ab}\exp(-aJ^b)(J^{c-b+1}(1+o(1))).
\end{eqnarray*}
\end{proof}

\begin{lemma}\label{ch2:clem}
{For any $q, a>0$ we have as $x\to\infty$ \begin{align*}\int_{e^a}^x\frac{dz}{(\ln(z))^q}\leq\frac{x}{(\ln(x))^{q}}(2+o(1)).\end{align*}}
\end{lemma}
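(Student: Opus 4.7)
The plan is to split the integral at an intermediate point $x^\theta$ for a suitable $\theta \in (0,1)$ chosen depending on $q$, and bound each piece separately. The contribution from $[e^a, x^\theta]$ will be negligible because the integrand is bounded and the interval has length $o(x/(\ln x)^q)$, while on $[x^\theta, x]$ we have $\ln z \geq \theta \ln x$ so the integrand is uniformly controlled by $(\theta \ln x)^{-q}$.

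Fix any $\theta \in (0,1)$ (to be specified below, depending only on $q$). On $[e^a, x^\theta]$ we have $\ln z \geq a$, so
\[
\int_{e^a}^{x^\theta} \frac{dz}{(\ln z)^q} \leq \frac{x^\theta - e^a}{a^q}.
\]
Since $\theta < 1$, we have $x^\theta = o\!\left(x/(\ln x)^q\right)$ as $x \to \infty$, so this portion contributes $o\!\left(x/(\ln x)^q\right)$. On $[x^\theta, x]$, the inequality $\ln z \geq \theta \ln x$ gives
\[
\int_{x^\theta}^x \frac{dz}{(\ln z)^q} \leq \frac{x - x^\theta}{\theta^q (\ln x)^q} \leq \frac{1}{\theta^q}\cdot\frac{x}{(\ln x)^q}.
\]

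Combining these two estimates yields
\[
\int_{e^a}^x \frac{dz}{(\ln z)^q} \leq \frac{1}{\theta^q}\cdot\frac{x}{(\ln x)^q} + o\!\left(\frac{x}{(\ln x)^q}\right).
\]
Since $\theta^{-q} \to 1$ as $\theta \to 1^-$, we can choose $\theta$ (depending on $q$ but not on $x$) such that $\theta^{-q} \leq 2$; for instance, $\theta = 2^{-1/(2q)}$ yields $\theta^{-q} = \sqrt 2 < 2$. This gives the claimed bound $\frac{x}{(\ln x)^q}(2 + o(1))$.

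There is no real obstacle here: the argument is a routine splitting estimate, and the only point requiring attention is that the threshold $\theta$ must be chosen depending on $q$ (so that $\theta^{-q}$ stays below $2$). In fact, by letting $\theta \to 1$ the constant could be reduced to any value exceeding $1$, or equivalently one could obtain the sharper asymptotic $\int_{e^a}^x dz/(\ln z)^q \sim x/(\ln x)^q$ via a single integration by parts, but the weaker statement with constant $2$ is all that is needed in the proof of Lemma~\ref{l4.5}.
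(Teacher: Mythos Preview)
Your proof is correct and takes a genuinely different route from the paper. The paper splits at the fixed point $e^{2q}$ and then integrates by parts on $[e^{2q},x]$, producing the same integral on the right-hand side with an extra factor of $q/\ln z \le 1/2$; absorbing this term yields the constant $2$. Your argument instead splits at the moving point $x^\theta$ and bounds each piece crudely by (supremum of integrand)$\times$(length of interval), avoiding integration by parts entirely. Your approach is more elementary and, as you note, immediately shows that the constant $2$ can be replaced by any number exceeding $1$ by pushing $\theta\to 1^-$; the paper's argument is closer to the classical derivation of the asymptotics of the logarithmic integral and makes the leading term $x/(\ln x)^q$ appear directly from the integration by parts. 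Either route suffices for the application in Lemma~\ref{l4.5}. One small point worth making explicit: the split at $x^\theta$ requires $x^\theta>e^a$, which holds for all sufficiently large $x$ and is therefore harmless in the $x\to\infty$ regime.
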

\begin{proof}
{We split the integral as follows
\begin{align}\label{eq:claim}\int_{e^a}^x\frac{dz}{(\ln(z))^q}&=\int_{e^a}^{e^{2q}}\frac{dz}{(\ln(z))^q}+\int_{e^{2q}}^x\frac{dz}{(\ln(z))^q}\nonumber\\
&=c(q,a)+\int_{e^{2q}}^x\frac{dz}{(\ln(z))^q},
\end{align}
where $c(q,a)$ is a real constant. For $z\geq e^{2q}$ it holds \begin{align*}\ln(z)\geq 2q,\end{align*} hence
dividing by $(\ln(z))^{q+1}$ and rearranging terms we get that \begin{align}\label{eq:ineq}\frac{q}{(\ln(z))^{q+1}}\leq\frac{1}{2(\ln(z))^q}.\end{align}
Integration by parts in the integral on the right hand side of (\ref{eq:claim}) gives
\begin{align*}\int_{e^{2q}}^x\frac{dz}{(\ln(z))^q}=\frac{x}{(\ln(x))^q}-\frac{e^{2q}}{(2q)^q}+\int_{e^{2q}}^x\frac{q}{(\ln(z))^{q+1}}dz,\end{align*}
hence using (\ref{eq:ineq}) and rearranging terms, we have
\begin{align*}
\int_{e^{2q}}^x\frac{dz}{(\ln(z))^q}&\leq 2\frac{x}{(\ln(x))^q}-2\frac{e^{2q}}{(2q)^q}\\
&=2\frac{x}{(\ln(x))^q}+\tilde{c}(q).
\end{align*}
Concatenating we obtain the result.}
\end{proof}

\section{Example}
\label{s:5}

In this section, we present the Cauchy problem for the
Helmholtz equation as an example to which the
{theoretical} analysis of
this paper can be applied. For simplicity, we only consider the
small wave number case ($0<k<1$) for illustration. For more details
regarding the more general case, we refer to \cite{ZFD12}.

Consider the following {boundary value} problem
for the Helmholtz equation:
\begin{equation}\label{51}
\left\{
  \begin{array}{ll}
    \Delta v(x,y)+k^2 v(x,y)=0,
    & (x,y)\in (0,\pi)\times(0,1), \\
    v_y(x,0)=0,  & x\in [0,\pi], \\
    v(x,1)=u(x),  & x\in [0,\pi], \\
    v(0,y)=v(\pi,y)=0, & y\in [0,1].
  \end{array}
\right.
\end{equation}
Problem (\ref{51}) is well-posed {since it corresponds to
inversion of a negative-definite ellipic operator with
mixed Dirichlet/Neumann data.}
In fact, by the method of separation of variables,
the solution $v(x,y)$ in domain $(0,\pi)\times (0,1)$ can be
expressed as
\begin{equation}\label{52}
v(x,y)=\sum\limits_{j=1}^{\infty}\frac{\cosh(y\sqrt{j^2-k^2})}{\cosh(\sqrt{j^2-k^2})}u_j\varphi_j(x),
\end{equation}
where $\varphi_j(x)=\sqrt{\frac{2}{\pi}}\sin(jx)$ and $u_j=\langle
u, \varphi_j\rangle$.

Define the forward mapping
$\mathcal{L}:\mathscr{D}(\mathcal{L})\subset L^2(0,\pi)\rightarrow
L^2(0,\pi)$ {by} \[\mathcal{L}
u(x)=v(x,0)=\sum\limits_{j=1}^{\infty}\frac{1}{\cosh(\sqrt{j^2-k^2})}u_j
\varphi_j(x),\] {which maps the boundary data of (\ref{51}) on $y=1$
into the solution on $y=0.$ Then $\mathcal{L}$ is a self-adjoint,
positive-definite, linear operator, with eigenvalues behaving as}
\begin{equation}\label{53}
l_j=\frac{1}{\cosh(\sqrt{j^2-k^2})}\sim\exp(-j).
\end{equation}

{The inverse problem is to find the function $u$,
given noisy observations of $v(\cdot,0).$
More precisely the data $d$ is given by
\begin{align*}
d&=v(\cdot,0)+\frac{1}{\sqrt{n}}\xi,\\
&={\mathcal L}u+\frac{1}{\sqrt{n}}\xi.
\end{align*}
If we place a Gaussian measure $N(0, \tau^2\C_0)$ as prior on $u$ and
assume that $\xi$ is also Gaussian $N(0,\C_1)$,
then we may apply the theory developed in this paper.}
Under Assumption \ref{a2.1}, Theorem
\ref{t4.3} can be applied to this problem with $b=1$ and $s=1$ to obtain the
contraction rate of the conditional Gaussian posterior distribution.

We now present a numerical simulation for obtaining the rate of the
MISE as the noise disappears ($n\to\infty$), when $\alpha=2,
\gamma=1$ and we have a fixed $\tau=1$. In this case, our theory
predicts that
\[\rm MISE\asymp\big(\ln(\sqrt{n})\big)^{-2(\alpha\wedge\gamma)}=\big(\ln(\sqrt{n})\big)^{-2}.\]
To simulate MISE we average the error over a thousand realizations
of the noise $\xi$, for $n=10^k, \;k=1,...,100$. We denote the
simulated MISE by $\widehat{\rm MISE}$. The true solution
$u^\dagger\in H^\gamma$ is a fixed draw from a Gaussian measure
$N(0,\Sigma)$, where $\Sigma$ has eigenvalues
$\sigma_j=j^{-2\gamma-1-\varepsilon},$ for $\varepsilon=10^{-10}$.
We use the first  $10^5$ Fourier modes. In Figure \ref{fig} we plot
$-\frac12\ln\big(\widehat{\rm MISE}\big)$ against
$\ln\big(\ln(\sqrt{n})\big)$ in the case $ \beta=0$. The solid line
is the relation predicted by Theorem \ref{t4.1}, that is, a line
with slope $1$. A least square fit to the simulated points gives a
slope of $1.0341$ with coefficient of determination $0.9884$.
\begin{figure}
\begin{center}
  \includegraphics[width=9cm]{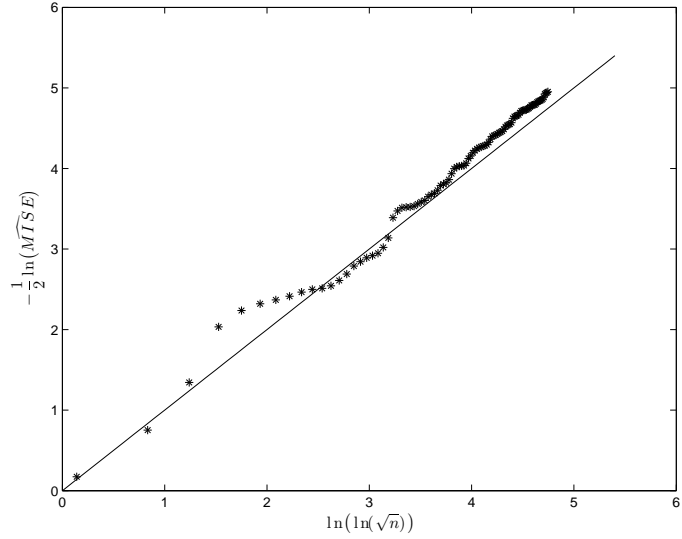}
  \vspace*{-8pt}
  \caption{\small{$-\frac12\ln\big(\widehat{\rm MISE}\big)$ plotted against
            $\ln\big(\ln(\sqrt{n})\big)$ for $n=10^k, \;k=1,...,100$ in the case
            $b=s=1, \alpha=2, \beta=0, \gamma=1$, for fixed $\tau=1$.}}\label{fig}
\end{center}
\end{figure}
In Figure \ref{fig2} we have $\beta=2$ and all the other parameters
the same. The least squares fit gives a slope $0.9723$ with
coefficient of determination $0.9916$, confirming that the
regularity of the noise as determined by $\beta$ does not affect the
rate of convergence.
\begin{figure}
\begin{center}
  \includegraphics[width=9cm]{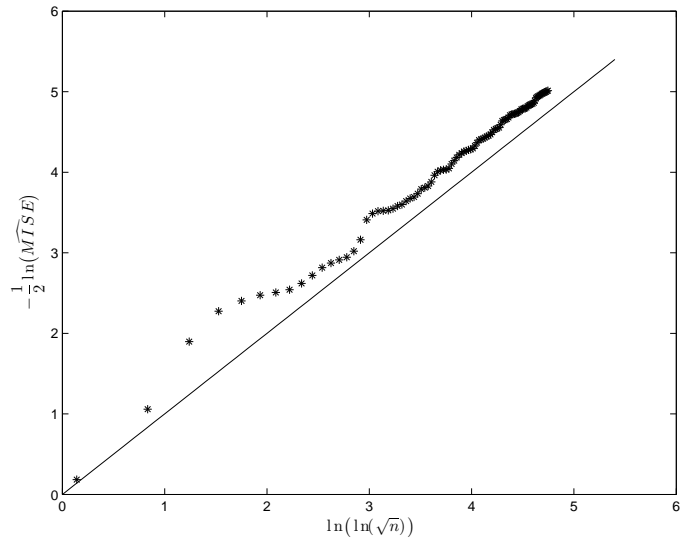}
  \vspace*{-8pt}
  \caption{\small{$-\frac12\ln\big(\widehat{\rm MISE}\big)$ plotted against
            $\ln\big(\ln(\sqrt{n})\big)$ for $n=10^k, \;k=1,...,100$ in the case
            $b=s=1, \alpha=2, \beta=2, \gamma=1$, for fixed $\tau=1$.}}\label{fig2}
\end{center}
\end{figure}

\vspace{4mm}

{\bf Acknowledgmenets.} {The authors are grateful to
Bartek Knapik and Harry Van Zanten for helpful discussions. {The authors are also grateful to two referees for several very useful comments}.}

\bibliographystyle{plain}
\bibliography{Expon-IP}

\begin{thebibliography}{10}

\bibitem{ALS12}
S.~Agapiou, S.~Larsson, and A.~M. Stuart.
\newblock Posterior cotraction rates for the {B}ayesian approach to linear
  ill-posed inverse problems.
\newblock {\em Stoch. Proc. Apl.}, 123(10):3828--3860, 2013.

\bibitem{Cav08}
L.~Cavalier.
\newblock Nonparametric statistical inverse problems.
\newblock {\em Inverse Problems}, 24(3), 2008.

\bibitem{Giu06}
Giuseppe Da~Prato.
\newblock {\em An introduction to infinite-dimensional analysis}.
\newblock Universitext. Springer-Verlag, Berlin, 2006.
\newblock Revised and extended from the 2001 original by Da Prato.

\bibitem{DZ92}
Giuseppe Da~Prato and Jerzy Zabczyk.
\newblock {\em Stochastic equations in infinite dimensions}.
\newblock Cambridge University Press, Cambridge, 1992.

\bibitem{DF86}
P.~Diaconis and D.~Freedman.
\newblock On the consistency of {B}ayes estimates.
\newblock {\em Ann. Statist.}, 14(1):1--67, 1986.
\newblock With a discussion and a rejoinder by the authors.

\bibitem{EHN96}
Heinz~W. Engl, Martin Hanke, and Andreas Neubauer.
\newblock {\em Regularization of inverse problems}.
\newblock Kluwer Academic Publishers (Dordrecht and Boston), 1996.

\bibitem{Fr70}
J.~N. Franklin.
\newblock Well-posed stochastic extensions of ill-posed linear problems.
\newblock {\em J. Math. Anal. Appl.}, 31:682--716, 1970.

\bibitem{GGV00}
S.~Ghosal, J.~K. Ghosh, and A.~W. van~der Vaart.
\newblock Convergence rates of posterior distributions.
\newblock {\em Ann. Statist.}, 28(2):500--531, 2000.

\bibitem{Hoh00}
T.~Hohage.
\newblock Regularization of exponentially ill-posed problems.
\newblock {\em Numerical Functional Analysis and Optimization},
  21(3-4):439--464, 2000.

\bibitem{KVZ11}
B.~T. Knapik, A.~W. van~der Vaart, and J.~H. van Zanten.
\newblock Bayesian inverse problems with {G}aussian priors.
\newblock {\em Ann. Statist.}, 39(5):2626--2657, 2011.

\bibitem{KVZ12}
B.~T. Knapik, A.~W. van~der Vaart, and J.~H. van Zanten.
\newblock Bayesian recovery of the initial condition for the heat equation.
\newblock {\em Comm. Statist. Theory Methods}, 42, 2013.

\bibitem{La07}
S.~Lasanen.
\newblock Measurements and infinite-dimensional statistical inverse theory.
\newblock {\em PAMM}, 7:1080101--1080102, 2007.

\bibitem{las12}
S.~Lasanen.
\newblock Non-gaussian statistical inverse problems. {Part I}: Posterior
  distributions.
\newblock {\em Inverse Problems and Imaging}, 6(2):215--266, 2012.

\bibitem{las12b}
S.~Lasanen.
\newblock Non-gaussian statistical inverse problems. {Part II}: Posterior
  distributions.
\newblock {\em Inverse Problems and Imaging}, 6(2):267--287, 2012.

\bibitem{Lax07}
Peter~D. Lax.
\newblock {\em Linear algebra and its applications. Enlarged second edition}.
\newblock Wiley-Interscience, 2007.

\bibitem{LPS89}
M.~S. Lehtinen, L.~Paivarinta, and E.~Somersalo.
\newblock Linear inverse problems for generalised random variables.
\newblock {\em Inverse Problems}, 5(4):599, 1989.

\bibitem{Man84}
A.~Mandelbaum.
\newblock Linear estimators and measurable linear transformations on a
  {H}ilbert space.
\newblock {\em Probability Theory and Related Fields}, 65:385--397, 1984.
\newblock 10.1007/BF00533743.

\bibitem{PSZ12}
Y.~Pokern, A.~M. Stuart, and J.~H. van Zanten.
\newblock Posterior consistency via precision operators for {B}ayesian
  nonparametric drift estimation in {SDE}s.
\newblock {\em Stoch. Proc. Apl.}, 123(2):603--628, 2013.

\bibitem{Sch07}
Franz Schwabl.
\newblock {\em Quantum mechanics}.
\newblock Springer, Berlin, 2007.

\bibitem{Stu10}
A.~M. Stuart.
\newblock Inverse problems: A {B}ayesian perspective.
\newblock {\em Acta Numerica}, 19:451--559, 2010.

\bibitem{ZFD12}
Y.X. Zhang, C.L. Fu, and Z.L. Deng.
\newblock An a posteriori truncation method for some cauchy problems associated
  with helmholtz-type equations.
\newblock {\em Inverse Problems in Science and Engineering}, 21(7):1151--1168,
  2013.

\end{thebibliography}
\end{document}